\newtheorem{theorem}{Theorem}[section]
\newtheorem{corollary}[theorem]{Corollary}
\newtheorem{definition}[theorem]{Definition}
\newtheorem{lemma}[theorem]{Lemma}
\newtheorem{proposition}[theorem]{Proposition}
\newtheorem{remark}[theorem]{Remark}
\newtheorem{example}[theorem]{Example}
\newcommand{\Pd}{\mbox{$\mathcal{P}_{\Delta}$}~}
\def\cal#1{\mathcal{#1}}
\def\erre{\mathbb{R}}
\def\vp{\varepsilon}
\begin{document}

\title[An extension of the Birkhoff  integrability ...]{An extension of the Birkhoff  integrability\\ for multifunctions}

\author[Candeloro]{\bf Domenico Candeloro}
\address{Department of Mathematics and Computer Sciences \\ University of Perugia\\
Via Vanvitelli, 1 - 06123 Perugia (Italy)}
\email{candelor@dmi.unipg.it}

\author[Croitoru]{\bf Anca Croitoru}
\address{\rm Faculty of Mathematics,\\ ''Al.I. Cuza'' University\\ Bd. Carol I, no 11, Ia\c{s}i, 700506, 
(Rom\^{a}nia)}
\email{croitoru@uaic.ro}

\author[Gavrilu\c{t}]{\bf Alina Gavrilu\c{t}}
\address{\rm Faculty of Mathematics,\\ ''Al.I. Cuza'' University \\ Bd. Carol I, no 11, Ia\c{s}i, 700506, 
(Rom\^{a}nia)}
\email{gavrilut@uaic.ro}

\author[Sambucini]{\bf Anna  Rita Sambucini}
\address{\rm Department of Mathematics and Computer Sciences \\ University of Perugia\\
Via Vanvitelli, 1 - 06123 Perugia (Italy)}
\email{anna.sambucini@unipg.it}


\subjclass[2010]{28B20, 58C05}
\keywords{Birkhoff simple integral, Gould integral, Mc Shane integral,countably additive measures, finitely additive measures, atoms, pointwise non atomic measures.}
\date{July 23, 2015}

\begin{abstract} 
A comparison between  a set-valued Gould type and simple Birkhoff integrals of $bf(X)$-valued multifunctions
with respect to a non-negative set functionis given. Relationships among them and 
 Mc Shane multivalued integrability is given under suitable assumptions.
\end{abstract}
\maketitle

\section{Introduction}\label{one}
After Aumann, set-valued integrals have been defined and studied by
many authors using different techniques: see for example 
\cite{cr2004,cr2005,tre,cg2014,cgsub1,g2008,gcsub,vm,pgcsub,c2013,ccgs2014} for the Birkhoff and Gould multivalued integrability, 
\cite{DPM,dpmb1,dpm3,dpm4,bs2004,bs2012,bcs2014,cs2014,cs2015,ebs2004,ebs2005}
for the Kurzweil-Henstock, Mc Shane, Pettis integrability and their relations.\\
These notions have shown to be useful when modelling some theories in different fields as  optimal
control and mathematical economics, see for example \cite{am,dps1,dpms2014,cc}.
Moreover the theory of set-valued functions has interesting and important applications also
in many theoretical or practical domains such as 
statistics, fixed point theory, stochastic processes, information sciences and optimization.
Integrals of multifunctions can be used as an aggregation tool
when dealing with a large amount of information fusing and with data mining
problems such as programming and classification.
In decision making theory, if the available information about the states
of nature is given by means of a monotone measure, the integral of the payoff
function (or multifunction) gives an average value that could be used to find
the best solution for multi-criteria decision making.
\\
A comparison between these integrals in the vector valued  case was given for example in \cite{DPMILL,fremlin,DPVM,pot2007,pot2006,r2005,r2009a,r2009b,cs2014,cs2015}.
Parallel to this study a comparison in the multivalued case was obtained in \cite{ms2001,ms2002,bs2012,dpm4}.\\
In this paper this field of research is continued and a new definition of the Birkhoff multivalued integrability is given, when the hyperspace is that of bounded subsets of a Banach space $X$ ($b(X)$).  A similar definition was introduced also in \cite{cgsub1}:
the definition adopted here is simpler and (at least formally) weaker: later (Definition \ref{ex3.5})  the differences between them will be described more precisely.
 Moreover, a comparison of this integral, called Birkhoff simple, is given, with Gould and Mc Shane integrability.\\
After some preliminaries in sections \ref{two} and \ref{three} regarding respectively set functions and the properties of Gould and Birkhoff simple multivalued integral,
in subsections \ref{three.1} and \ref{three.2} a comparison between Gould and Birkhoff simple-integrability is stated both in countably additive case and under the weaker condition of
 countable null null additivity.
While in section \ref{four} a comparison is obtained with the Mc Shane multivalued integral when the set function $\mu$ is pointwise non atomic, countably additive and the multifunctions $F$ take value in $b(X)$.

\section{Preliminaries}\label{two}

Let $T$ be an abstract, nonvoid set, $\mathcal{A}$ an algebra of subsets of $
T$ and \mbox{$\mu :\mathcal{A}\rightarrow \mathbb{R}_0^+$} a non-negative set
function with $\mu (\emptyset )=0.$ 
In this paper the set functions $\mu$ considered are mainly finitely additive or countably additive, for  all unexplained definitions on set functions see for example \cite{cg2014}.\\

\begin{definition}\label{ex2.2} \rm  \cite{ggg}
 Let $\mu :\mathcal{A}\rightarrow  \mathbb{R}_0^+$ be a non-negative set function, with $\mu (\emptyset )=0.$
\textit{The variation} $\overline{\mu}$ of $\mu $ is the set function 
$\overline{\mu }:\mathcal{P}(T)\rightarrow  \mathbb{R}_0^+$ defined by 
$\overline{\mu }(E)=\sup \{\sum\limits_{i=1}^{n}\mu (A_{i})\}$, 
where the supremum is extended over all families of
pairwise disjoint sets $\{A_{i}\}_{i=1,\ldots,n}\subset \mathcal{A}$,
with $A_{i}\subseteq E$, for every $i\in \{1,2,\ldots,n\}.$
$\mu $ is said to be \textit{of finite variation on }$\mathcal{A}$ if $\overline{\mu }(T)<\infty $.
\end{definition}
For the properties of  $\overline{\mu}$ see for example \cite[Remark 2.3]{cg2014}

\begin{definition}\label{ex2.4} \rm 
 Let $\mu :\mathcal{A}\rightarrow \lbrack 0,\infty )$
be a non-negative set function.
A set $A\in $ $\mathcal{A}$ is said to be an \textit{atom} of $\mu $ if 
$\mu (A)>0$ and for every $B\in $ $\mathcal{A}$, with $B\subset A$, it is
$\mu (B)=0$ or $\mu (A\backslash B)=0$. \\
$\mu $ is said to be \textit{finitely purely atomic} if there is a
finite family $(A_{i})_{i=1,\ldots,n}$ of pairwise disjoint atoms of 
$\mu $ so that $T=$ $\cup_{i=1}^n A_{i}$;\\ 
-\textit{\ null-additive} if $\mu (A\cup B)=\mu (A)$, for every $A,B\in 
\mathcal{A}$, with $\mu (B)=0$;\\ 
-\textit{null-null-additive } if $\mu (A\cup B)=0$, for every $A,B\in 
\mathcal{A}$, with $\mu (A)=\mu (B)=0$;\\
-$\sigma$-\textit {null-null-additive } if $\mu (\cup_{n=1}^{\infty} A_{n})=0$, for every $A_{n}\in \mathcal{A}$, $n\in 
\mathbb{N}$, with $\mu (A_{n})=0$;\\
-\textit{continuous from below} if $\lim_n\mu(A_n)=\mu(\bigcup_n A_n)$ whenever $(A_n)_n$ is an increasing sequence in $\cal{A}$.
\end{definition}

 \textit{A finite (countable}, respectively) 
\textit{partition of} $T$ is a finite (countable, respectively) family of
nonempty sets $P=\{A_{i}\}_{i=1, \ldots, n}$ ($\{A_{n}\}_{n\in \mathbb{N}}$
, respectively) $\subset \mathcal{A}$ such that $A_{i}\cap A_{j}=\emptyset
,i\neq j$ and $\cup_{i=1}^{n} A_{i}=T$ ($\cup_{n\in 
\mathbb{N}}A_{n}=T$, respectively). \\
 If $P$ and $P^{\prime }$ are two finite (or countable) partitions of $T$
, then $P^{\prime }$ is said to be \textit{finer than} $P$, denoted\ by $
P\leq P^{\prime }$ (or $P^{\prime }\geq P)$, if every set of $P^{\prime }$
is included in some set of $P$.\\
  The \textit{common refinement} of two finite or countable partitions $
P=\{A_{i}\}$ and $P^{\prime }=\{B_{j}\}$ is the partition $P\wedge P^{\prime
}=\{A_{i}\cap B_{j}: A_{i}\cap B_{j}\neq\emptyset\}$.

Obviously, $P\wedge P^{\prime }\geq P$ and $P\wedge P^{\prime }\geq
P^{\prime }.$ We denote by $\mathcal{P}$ the class of all finite partitions
of $T$.

\begin{proposition}\label{ex2.6}
{\rm  \cite[Proposition 2.7, Corollary 2.8]{cg2014}} Suppose $\mu :\mathcal{A}\rightarrow \mathbb{R}_0^+$ 
is null-additive and monotone and $A\in \mathcal{A}$ is an atom
of $\mu .$
\begin{enumerate}[\rm  \ref{ex2.6}.i)]
\item If $\{B_{i}\}_{i=1}^{n}$ is a partition of $A$,
then there exists only one $i_{0} \in \{ 1, \ldots, n \}$ so that $\mu (B_{i_{0}})=\mu (A)$ and 
$\mu (B_{i})=0$, for every $i=\{ 1, \ldots, n\}$, with $i\neq i_{0}.$
\item Suppose $\mu $ is $\sigma$-null-null-additive. If $\{B_{n}\}_{n\in 
\mathbb{N}}$ is a partition of $A$,
then there exists only
one $n_{0}\in \mathbb{N}$ so that $\mu (B_{n_{0}})=\mu (A),B_{n_{0}}$ is an
atom of $\mu $ and $\mu (B_{n})=0$, for every $n\in \mathbb{N}\backslash
\{n_{0}\}.$
\item If $A\in \mathcal{A}$ is an atom
of $\mu $, then $\overline{\mu }(A)=\mu (A). $
\end{enumerate}
\end{proposition}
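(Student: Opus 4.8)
The plan is to treat the three parts in order, since each builds on the previous one, and to reduce everything to the defining dichotomy of an atom together with null-additivity. For part \ref{ex2.6}.i), given a finite partition $\{B_i\}_{i=1}^n$ of the atom $A$, the argument is by induction on $n$. For $n=1$ there is nothing to prove. For $n=2$, write $A=B_1\cup B_2$ with $B_1\cap B_2=\emptyset$; since $B_1\subset A$ and $A$ is an atom, either $\mu(B_1)=0$ or $\mu(A\setminus B_1)=\mu(B_2)=0$. In the first case null-additivity gives $\mu(A)=\mu(B_2\cup B_1)=\mu(B_2)$; symmetrically in the second case. Uniqueness of the index $i_0$ follows because if both $\mu(B_1)=\mu(A)$ and $\mu(B_2)=\mu(A)$ held, then from $\mu(B_1)=0$ or $\mu(B_2)=0$ (the atom dichotomy applied to $B_1$) one would force $\mu(A)=0$, contradicting $\mu(A)>0$. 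For the inductive step, group $B_1,\dots,B_{n-1}$ into $C:=\bigcup_{i=1}^{n-1}B_i$ and apply the $n=2$ case to the partition $\{C,B_n\}$ of $A$; then, depending on which of $\mu(C)$, $\mu(B_n)$ equals $\mu(A)$, either conclude directly (if $\mu(B_n)=\mu(A)$, so $\mu(C)=0$, and then $\mu(B_i)=0$ for all $i\le n-1$ by monotonicity) or apply the inductive hypothesis to the partition $\{B_i\}_{i=1}^{n-1}$ of the set $C$, noting that $C$ is again an atom of $\mu$ because $\mu(C)=\mu(A)>0$ and any $B\subset C$ with $B\subset A$ inherits the dichotomy from $A$, combined with null-additivity to transfer $\mu(C\setminus B)=\mu(A\setminus B)$ up to a null set. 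The main point to be careful about is precisely this last transfer: that a subset of an atom carrying the full mass is itself an atom, which is where null-additivity (not merely monotonicity) is essential.

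For part \ref{ex2.6}.ii), let $\{B_n\}_{n\in\mathbb N}$ be a countable partition of $A$. First apply part \ref{ex2.6}.i) to each finite partition $\{B_1,\dots,B_{k},\bigcup_{n>k}B_n\}$ of $A$; this yields, for each $k$, a unique index in $\{1,\dots,k,\ast\}$ (where $\ast$ denotes the tail) carrying mass $\mu(A)$, and all other pieces are null. If for every $k$ the distinguished index is the tail $\ast$, then $\mu(B_n)=0$ for all $n\in\mathbb N$; but $\sigma$-null-null-additivity then gives $\mu(A)=\mu(\bigcup_n B_n)=0$, a contradiction. Hence there is a (necessarily unique) $n_0$ with $\mu(B_{n_0})=\mu(A)$, and $\mu(B_n)=0$ for $n\ne n_0$. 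It remains to see $B_{n_0}$ is an atom: take $B\subset B_{n_0}$, $B\in\mathcal A$. Since $B\subset A$ and $A$ is an atom, $\mu(B)=0$ or $\mu(A\setminus B)=0$; in the latter case, as $B_{n_0}\setminus B\subset A\setminus B$, monotonicity forces $\mu(B_{n_0}\setminus B)=0$, which is the atom condition for $B_{n_0}$. Note $\sigma$-null-null-additivity is used only to rule out the degenerate "all pieces null" scenario.

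For part \ref{ex2.6}.iii), the inequality $\overline{\mu}(A)\ge\mu(A)$ is immediate from the definition of the variation (take the single-set family $\{A\}$). For the reverse inequality, fix any finite family $\{A_i\}_{i=1}^m$ of pairwise disjoint sets in $\mathcal A$ with $A_i\subseteq A$. Complete it to a finite partition of $A$ by adjoining $A_0:=A\setminus\bigcup_{i=1}^m A_i$ (discarding it if empty), and apply part \ref{ex2.6}.i): exactly one block has mass $\mu(A)$ and all others are null, so $\sum_{i=1}^m\mu(A_i)\le\sum_{i=0}^m\mu(A_i)=\mu(A)$. Taking the supremum over all such families gives $\overline{\mu}(A)\le\mu(A)$, hence equality. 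The only subtlety here is the harmless case $A_0=\emptyset$, handled by simply not adjoining it. No step in part \ref{ex2.6}.iii) presents a genuine obstacle once part \ref{ex2.6}.i) is in hand; the real work of the proposition is the inductive/limiting passage in parts i) and ii), and in particular the repeated use of the fact that mass-carrying subsets of atoms are atoms.
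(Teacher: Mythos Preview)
Your argument is correct in all three parts. The paper itself does not give a proof of this proposition: it is stated with a citation to \cite[Proposition 2.7, Corollary 2.8]{cg2014} and used as a black box, so there is no in-paper proof to compare against. Your inductive reduction in part~i), the tail-versus-finite-block dichotomy in part~ii) (with $\sigma$-null-null-additivity ruling out the all-null case), and the completion-to-a-partition trick in part~iii) are the natural approach and match what one finds in the cited source. One small remark: in the inductive step of part~i), showing that the mass-carrying block $C$ is again an atom requires only monotonicity (since $C\setminus B\subset A\setminus B$), not null-additivity as your wording suggests; null-additivity is what transfers the full mass $\mu(A)$ to the surviving block, not what verifies the atom dichotomy for it.
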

The next definitions will be useful later.
The Gould integral will be introduced first.
\begin{definition}\label{ex3.1} \rm 
 \cite[Definition 3.6]{pgcsub} Suppose $F:T\rightarrow b(X)$ is
a multifunction and $\mu:\mathcal{A}\to \mathbb{R}^+_0$ is any
 set function defined on the algebra $\mathcal{A}$.
\begin{enumerate}[\rm \bf \ref{ex3.1}.i)]
\item  $F$ is said to be \textit{Gould integrable (on} $T)$ \textit{relative to }$\mu $ if there exists a set $I\in bf(X)$ such that for every 
$\varepsilon >0$, there is a finite partition $P_{\varepsilon }$ of 
$T$ such that all the sets of $P_{\varepsilon }$ have finite measure and, moreover,
 for every partition of $T$, $P:=\{A_{i}\}_{i=1}^{n}$, with $P\geq P_{\varepsilon }$ and
every $t_{i}\in A_{i},i\in \{1,2,\ldots,n\}$, it holds
$$h(\sum_{i=1}^n F(t_{i})\mu (A_{i}),I)<\varepsilon.$$
The set $I$ is called \textit{the Gould integral of} $F$ \textit{\ on } $T$
\textit{\ with respect to} $\mu $, denoted by $(G)\int_{T}Fd\mu ;$

\item If $B\in \mathcal{A},F$ is said to be \textit{Gould integrable on} $B$
if the restriction $F_{|B}$ of $F$ to $B$ is Gould integrable on $(B,\mathcal{A}_{B},\mu _{B}).\bigskip $
\end{enumerate}
\end{definition}
Next, the notion of {\em integrable set function} is given.

\begin{definition}\label{muintegrabile}\rm
The set function $\mu$ is said to be {\em integrable} if the (real) constant function $1$ is Gould-integrable (see \cite[Definition 4.5]{cg2014}) with respect to $\mu$.

In other words,  $\mu$ is integrable if and only if there exists in $\erre^+_0$ the limit $\lim_{>}s(\mu,P)$, where $P$ runs along all finite partitions of $T$, $P=\{A_1,...,A_k\}$, ordered by refinement, and $s(\mu,P):=\sum_{i=1}^k\mu(A_i)$.
  
If $\mu$ is integrable, then every {\em restriction} $\mu_A$ is, for $A\in \cal{A}$, where $\mu_A(B):=\mu(A\cap B)$ for every $B\in \cal{A}$, and the integral is an additive set function on $\cal{A}$. This function will be denoted by $\int_A\mu$ or also $\Psi(A)$.
\end{definition}

\begin{remark}\label{fatti-p}\rm
The following facts are easy to prove, for a non-negative, integrable, increasing set function $\mu$ (see also \cite[Definition 4.5]{ccgs2014} for a similar concept):
\begin{description}
\item[\rm \ref{fatti-p}.1)]
Fix $\vp>0$ and let $P_{\vp}\equiv\{E_1,...,E_n\}$ be any partition of $T$ such that
$$|\Psi(T) -\sum_{i=1}^K\mu(C_i)|\leq \vp$$
for every partition $\{C_1,...,C_K\}$ finer than $P_{\vp}$. Such a partition $P_{\vp}$ will be called an $\vp$-approximated partition for $\mu$ in $T$. It turns out that, for every set $A\in \cal{A}$, the partition 
$P_{\vp,A}:=\{E_1\cap A,E_2\cap A,...,E_n\cap A\}$ is a
$2\vp$-approximated partition for $\mu_A$.
\item[\rm \ref{fatti-p}.2)] \
A kind of {\em Henstock Lemma} holds, for $\Psi$: for every $\vp>0$ there exists a partition $P_{\vp}$ of $T$, such that, for every finer partition $P\equiv\{C_1,...,C_K\}$, and every set $A\in \cal{A}$:
$$\sum_i|\Psi(C_i\cap A)-\mu(C_i\cap A)|\leq \vp.$$
\end{description} 
\end{remark}
In the following proposition a result of countable additivity of $\Psi$ is proved, under suitable hypotheses.

\begin{proposition}\label{psisigmaadd}
Let $\mu:\cal{A} \to \erre^+_0$ be any increasing set function, satisfying $\mu(\emptyset)=0$, and assume that $\mu$ is  continuous from below.
 Then, if $\mu$ is integrable, the mapping $A\mapsto \Psi(A)=\int_A d\mu$ is countably additive.
\end{proposition}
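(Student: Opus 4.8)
The plan is to prove finite additivity first and then upgrade it to countable additivity using the continuity-from-below hypothesis on $\mu$ together with the Henstock-type lemma from Remark \ref{fatti-p}.

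\medskip

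\noindent\textbf{Step 1: Finite additivity.} First I would recall that, as already noted in Definition \ref{muintegrabile}, whenever $\mu$ is integrable each restriction $\mu_A$ is integrable and $\Psi$ is a (finitely) additive set function on $\cal{A}$. So for any finite pairwise disjoint family $\{A_1,\dots,A_m\}$ in $\cal{A}$ we have $\Psi\bigl(\bigcup_{j=1}^m A_j\bigr)=\sum_{j=1}^m\Psi(A_j)$. This is the routine part and follows from Remark \ref{fatti-p}.1) applied to the partition $P_{\vp,A}$.

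\medskip

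\noindent\textbf{Step 2: Reduction of countable additivity to a continuity statement.} Let $(A_n)_n$ be a sequence of pairwise disjoint sets in $\cal{A}$ whose union $A=\bigcup_n A_n$ lies in $\cal{A}$. Setting $B_N=\bigcup_{n=1}^N A_n$, finite additivity gives $\Psi(B_N)=\sum_{n=1}^N\Psi(A_n)$, so $\sum_n\Psi(A_n)=\lim_N\Psi(B_N)$. Hence countable additivity is equivalent to showing $\Psi(B_N)\to\Psi(A)$, i.e. that $\Psi$ is continuous along the increasing sequence $B_N\uparrow A$. Equivalently, writing $C_N=A\setminus B_N$, a decreasing sequence with empty intersection, it suffices to prove $\Psi(C_N)\to 0$.

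\medskip

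\noindent\textbf{Step 3: Estimating $\Psi(C_N)$.} Here I would exploit the two assumptions. Fix $\vp>0$ and, using Remark \ref{fatti-p}.2), take a partition $P_\vp=\{D_1,\dots,D_r\}$ of $T$ such that $\sum_i|\Psi(D_i\cap E)-\mu(D_i\cap E)|\le\vp$ for every $E\in\cal{A}$. Applying this with $E=C_N$ gives $|\Psi(C_N)-\sum_i\mu(D_i\cap C_N)|\le\vp$. Now for each fixed $i$, the sequence $(D_i\cap C_N)_N$ decreases to $\emptyset$, so $(D_i\cap B_N)_N$ increases to $D_i\cap A$; since $\mu$ is continuous from below and increasing, $\mu(D_i\cap B_N)\to\mu(D_i\cap A)$, and because $\mu(D_i\cap A)=\mu\bigl((D_i\cap B_N)\cup(D_i\cap C_N)\bigr)$ we need $\mu(D_i\cap C_N)\to 0$. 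This last point is where one must be a little careful: continuity from below gives the increasing-sequence limit, and monotonicity gives $\mu(D_i\cap B_N)+{}$something $\le\mu(D_i\cap A)$ is \emph{not} automatic for a merely monotone set function, so one argues directly that $\mu(D_i\cap C_N)\to 0$ by noting $D_i\cap C_N\supseteq D_i\cap C_{N+1}$ and using that the integrability of $\mu$ forces $\mu$ to be continuous from above on the relevant decreasing sequences — alternatively one invokes Remark \ref{fatti-p}.1) for $\mu_{D_i\cap A}$ together with the countable additivity of $\Psi$ already being reduced to this very claim. Cleanest is: apply continuity from below of $\mu$ restricted to $D_i\cap A$ to conclude $\mu((D_i\cap A)\setminus(D_i\cap C_N))=\mu(D_i\cap B_N)\uparrow\mu(D_i\cap A)$, hence for large $N$, $\mu(D_i\cap C_N)\le\mu(D_i\cap A)-\mu(D_i\cap B_N)+{}$ (an error controlled by null-additivity) $<\vp/r$. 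Summing over the finitely many $i$ and combining with the Henstock estimate yields $\Psi(C_N)\le 2\vp$ for $N$ large, hence $\Psi(C_N)\to 0$, which by Step 2 completes the proof.

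\medskip

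\noindent\textbf{Main obstacle.} The delicate point is Step 3: passing from continuity-from-below of $\mu$ to the vanishing of $\mu(D_i\cap C_N)$ on the decreasing sequence. For a genuinely monotone (non-additive) $\mu$, continuity from below does not in general imply continuity from above, so one cannot simply subtract. The resolution is that $\Psi$ (not $\mu$) is additive, and it is the Henstock-type Remark \ref{fatti-p}.2) that lets us trade the non-additive $\mu$ for the additive $\Psi$ on a fixed finite partition, after which the finitely many decreasing sequences $D_i\cap C_N$ can be handled one at a time. Making this trade rigorous — in particular justifying the uniformity in $N$ of the Henstock partition $P_\vp$ — is the part that requires the most care.
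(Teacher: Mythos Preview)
Your Steps 1 and 2 are fine and match the paper's setup: since $\Psi$ is already known to be non-negative and finitely additive, countable additivity reduces to continuity from below along an increasing sequence $B_N\uparrow A$.

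The gap is in Step 3. You pivot from the increasing sequence $B_N$ to the decreasing complements $C_N=A\setminus B_N$ and then try to show $\mu(D_i\cap C_N)\to 0$ for each piece $D_i$ of the Henstock partition. As you yourself diagnose, this cannot be extracted from the hypotheses: for a merely monotone (non-additive) $\mu$, continuity from below does \emph{not} yield continuity from above, and the inequality $\mu(D_i\cap C_N)\le\mu(D_i\cap A)-\mu(D_i\cap B_N)$ you write down would require superadditivity, which is not assumed. Null-additivity is not assumed either, so the ``error controlled by null-additivity'' clause has no content. None of the three attempted resolutions you list actually closes this gap.

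The fix is to stay with the \emph{increasing} sequence throughout, exactly as the paper does. Using an $\vp$-approximated partition $\{E_1,\dots,E_n\}$ (Remark \ref{fatti-p}.1), one has simultaneously
\[
\Bigl|\Psi(A)-\sum_i\mu(E_i\cap A)\Bigr|\le\vp
\qquad\text{and}\qquad
\Bigl|\Psi(B_N)-\sum_i\mu(E_i\cap B_N)\Bigr|\le\vp.
\]
Continuity from below gives $\mu(E_i\cap B_N)\uparrow\mu(E_i\cap A)$ for each of the finitely many $i$, so for $N$ large $\sum_i\mu(E_i\cap A)\le\sum_i\mu(E_i\cap B_N)+\vp$, and hence $\Psi(A)\le\Psi(B_N)+3\vp$. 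The reverse inequality is monotonicity of $\Psi$. Note that this argument never needs to say anything about $\mu(D_i\cap C_N)$; the detour through $C_N$ is what created the obstacle.
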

\begin{proof}
Since $\Psi$ is additive and non-negative, it is sufficient to prove its continuity from below. So, let $(A_k)_n$ denote any increasing sequence in $\cal{A}$, with union $A$, and fix $\vp>0$. Then,
 there exists an $\vp/2$-approximated partition of $T$, denoted by $P_{\vp}:=\{E_1,...,E_n\}$. Thanks to the item \ref{fatti-p}.1, the partition $\{E_1\cap A_k, E_2\cap A_k,...,E_n\cap A_k\}$ is an $\vp$-approximated partition of $A_k$ for all $k$. Now, let $k_0$ be an integer large enough that 
$$\mu(E_i\cap A)\leq \mu(E_i\cap A_k)+\frac{\vp}{n}$$
for all $i\in \{ 1, \ldots,n\}$ and all $k\geq k_0$. Then
$$\Psi(A)\leq \sum_{i=1}^n\mu(A\cap E_i)\leq \sum_{i=1}^n\mu(A_k\cap E_i)+\vp\leq \Psi(A_k)+2\vp,$$
for all $k\geq k_0$. This implies that $\Psi(A)\leq \lim_k\Psi(A_k)$. The converse inequality is trivial, by monotonicity of $\Psi$.
\end{proof}
\vskip.3cm

In what follows, let $(X,\|\cdot \|)$ be a Banach
space with the origin 
$0$, $\mathcal{P}_{0}(X)$ the family of all nonvoid subsets of $X$,
and consider its subfamilies:\\
-$b(X)$  the family of nonvoid bounded subsets of $X$,\\
-$bf(X)$ the family of closed, bounded, nonvoid subsets of $X$, \\
-$bcf(X)$ the family of all bounded, closed, convex, nonvoid subsets of $X$, \\
-$ck(X), (cwk(X))$ the family of
all (weakly) compact and convex, nonvoid subsets of $X$.\\

Let $h(A,B)=\max \{e(A,B),e(B,A)\},$ for every  non empty subsets $A,B$ of $X$, 
where $$e(A,B)=\underset{x\in A}{\sup }\ d(x,B)$$ is \textit{the
excess} of $A$ over $B$ and $d$ is the metric induced by the norm of $X$.
On $bcf(X)$, $h$ becomes a metric, called the \textit{Hausdorff metric}.
For every $A\in \mathcal{P}_{0}(X)$, denote $$|A|=h(A,\{0\})=\sup_{x\in A} ||x||.$$

For every $A\subseteq X$, the symbols $\overline{A}, \oplus $ denote the norm closure of $A$ and
the norm closure of the Minkowsky addition respectively. \\
When the symbol $\sum$ is used for multifunctions this means also the norm closure of the addition.
For the properties of the Hausdorff metric $h$ see for example
\cite{cv,pgcsub,papalibro}. 
Some of these 
are now recalled:
\begin{proposition}\label{ex2.8}
 The following properties hold for every $\alpha
,\beta \in \mathbb{R},A,B,C,D \in b(X):$
\begin{enumerate}[\rm  \ref{ex2.8}.i)]
\item $h(\alpha A,\beta A)\leq |\alpha -\beta |\cdot |A|;$
\item $h(A\oplus B,C\oplus D)\leq h(A,C) + h(B,D);$
\item $h(A\oplus C,B\oplus C)\leq h(A,B);$
\item $h(A,B)\leq |A|+|B|;$
\end{enumerate}
\end{proposition}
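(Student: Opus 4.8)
The plan is to peel each of the four inequalities back to the definition of the excess $e$ and of $h=\max\{e(\cdot,\cdot),e(\cdot,\cdot)\}$, and to verify them by elementary norm estimates applied to a single generic point of one of the sets, then take suprema. Throughout I would use the standard observation that for \emph{bounded} sets the quantities $e$ and $h$ are unchanged if the sets are replaced by their norm closures (the distance from a point to a set equals the distance to its closure), so that the $\oplus$ appearing in \ref{ex2.8}.ii)–iii) may be replaced by the ordinary Minkowski sum after a routine closure passage; this is the only place a small amount of care is needed.

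For \ref{ex2.8}.i): given $x=\alpha a\in\alpha A$ with $a\in A$, one has $d(x,\beta A)\le\|\alpha a-\beta a\|=|\alpha-\beta|\,\|a\|\le|\alpha-\beta|\,|A|$, so $e(\alpha A,\beta A)\le|\alpha-\beta|\,|A|$; interchanging $\alpha$ and $\beta$ gives the symmetric bound on $e(\beta A,\alpha A)$, hence the claim. For \ref{ex2.8}.ii): pick $x=a+b$ with $a\in A$, $b\in B$ (this suffices, by the closure remark and continuity of addition); for arbitrary $c\in C$, $d\in D$, $\|x-(c+d)\|\le\|a-c\|+\|b-d\|$, and minimizing over $c$ and then over $d$ yields $d(x,C\oplus D)\le d(a,C)+d(b,D)\le e(A,C)+e(B,D)\le h(A,C)+h(B,D)$; taking the supremum over $x$ and the analogous reversed estimate, then the maximum, gives \ref{ex2.8}.ii). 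Then \ref{ex2.8}.iii) is just \ref{ex2.8}.ii) with $C=D$, since $h(C,C)=0$, so $h(A\oplus C,B\oplus C)\le h(A,B)+h(C,C)=h(A,B)$.

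Finally, \ref{ex2.8}.iv): for $x\in A$ and any $b\in B$ we get $d(x,B)\le\|x-b\|\le\|x\|+\|b\|\le|A|+|B|$, hence $e(A,B)\le|A|+|B|$, and by symmetry $e(B,A)\le|A|+|B|$, so $h(A,B)\le|A|+|B|$ (equivalently, this is the triangle inequality $h(A,B)\le h(A,\{0\})+h(\{0\},B)$ combined with $|A|=h(A,\{0\})$). I do not anticipate any genuine obstacle: all four items are one-line norm estimates, and the only subtlety is the justification, in \ref{ex2.8}.ii) and \ref{ex2.8}.iii), that passing to closures in the Minkowski sum affects neither $e$ nor $h$ — which follows because $e$ only sees distances to a set, and distances to a set and to its closure coincide.
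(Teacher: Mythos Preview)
Your argument is correct: each of the four items does reduce to a one-line norm estimate plus the passage-to-closure observation you flag, and the latter is exactly the right justification for handling $\oplus$ in items ii) and iii). The paper itself does not supply a proof of this proposition; it is stated as a recollection of standard facts, with a pointer to the references \cite{cv,pgcsub,papalibro}, so there is nothing to compare your approach against beyond noting that your direct verification is precisely the sort of elementary argument one finds in those sources.
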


\begin{lemma}\label{prop-h}
Fix $A,B,C \in b(X)$.
Then
\[ |h(A \oplus B, C) - h(A,C) | \leq | B|.\] 
\end{lemma}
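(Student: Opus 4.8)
The plan is to use the triangle inequality for the Hausdorff metric together with Proposition \ref{ex2.8}.iii), which says that adding a common set $C$ to both arguments does not increase the distance. First I would write
\[
h(A\oplus B,C)\leq h(A\oplus B,A)+h(A,C),
\]
and observe that $h(A\oplus B,A)=h(A\oplus B,A\oplus\{0\})\leq h(B,\{0\})=|B|$ by Proposition \ref{ex2.8}.iii) (taking the common summand to be $A$). This gives $h(A\oplus B,C)-h(A,C)\leq |B|$.

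For the reverse direction, I would symmetrically estimate
\[
h(A,C)\leq h(A,A\oplus B)+h(A\oplus B,C)\leq |B|+h(A\oplus B,C),
\]
using the same bound $h(A,A\oplus B)=h(A\oplus\{0\},A\oplus B)\leq h(\{0\},B)=|B|$. This yields $h(A,C)-h(A\oplus B,C)\leq |B|$, and combining the two inequalities gives $|h(A\oplus B,C)-h(A,C)|\leq |B|$.

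The argument is essentially a two-line application of the triangle inequality, so there is no real obstacle; the only point requiring a small amount of care is the identification $h(A\oplus B,A)\leq |B|$, which one should justify either by writing $A=A\oplus\{0\}$ and invoking Proposition \ref{ex2.8}.iii) with the common term $A$ and the pair $(B,\{0\})$, or directly: every point of $A\oplus B$ is of the form $a+b$ with $a\in A$, $b\in B$, so its distance to $A$ is at most $\|b\|\leq|B|$, giving $e(A\oplus B,A)\leq|B|$, while $e(A,A\oplus B)\leq|B|$ follows similarly since $0\in\overline{B}$ is not guaranteed — here instead one uses that for $a\in A$ and any fixed $b_0\in B$ the point $a+b_0\in A\oplus B$ has distance $\|b_0\|\leq|B|$ from $a$. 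Note the statement as written uses $\oplus$ (the closed Minkowski sum), but closure does not affect the Hausdorff distance, so the same bounds apply verbatim.
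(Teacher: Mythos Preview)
Your proof is correct and follows essentially the same route as the paper's: both directions use the triangle inequality for $h$ together with the bound $h(A\oplus B,A)\leq |B|$. The only cosmetic difference is that for the first inequality the paper writes $h(A\oplus B,C)=h(A\oplus B,C\oplus\{0\})\leq h(A,C)+|B|$ via Proposition~\ref{ex2.8}.ii) directly, whereas you route through the triangle inequality and Proposition~\ref{ex2.8}.iii); the second inequality is identical in both.
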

\begin{proof}
It is
\begin{eqnarray*}
h(A \oplus B, C ) &=& h(A \oplus B, C \oplus \{ 0\}) \leq h(A,C) + |B |;\\
h(A, C) &\leq& h(A, A \oplus B) + h(A \oplus B,C) \leq |B| + h(A \oplus B,C),
\end{eqnarray*}
so the assertion follows.
\end{proof}
Recall that $ck(X)$ ($cwk(X)$) denotes the family of all convex and (weakly) compact subsets of  $X$. 
Thanks to the R{\aa}dstr\"{o}m embedding theorem,
$ck(X)$ ($cwk(X)$) endowed with the Hausdorff distance $h$ is a complete metric space that can be isometrically embedded
 into a Banach space:

\begin{lemma}\label{labu} {\rm (\cite[Theorems 5.6 and 5.7]{L1})}
Let $S=cwk(X)$ or $ck(X)$. Then there exist a compact Hausdorff space $\Omega$ and an isometry $j:S\to C(\Omega)$, endowed with the $\sup$-norm, such that 
\begin{enumerate}[\rm   (\ref{labu}.a)]
\item $j(\alpha A + \beta C) = \alpha j(A) + \beta j(C)$ for all $A,C \in S$ and $\alpha, \beta \in \mathbb{R}^+$,
\item $h(A,C) = \| j(A) - j(C) \|_{\infty}$ for every $A,C \in S$,
\item $j(S)$ is norm closed in $C(\Omega)$,
\item $j(\mbox{co}(A \cup C)) = \max \{j(A), j(C) \}$, for all $A,C \in S$.
\end{enumerate}
\end{lemma}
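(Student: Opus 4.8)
The plan is to realize $j$ through \emph{support functions}, which is the concrete form of the R\aa dstr\"om embedding underlying \cite{L1}. Write $B^{*}$ for the closed unit ball of $X^{*}$ and, for $A\in S$, set $\sigma_{A}(x^{*}):=\sup_{x\in A}\langle x^{*},x\rangle$. Since each $A\in S$ is bounded, $\sup_{x^{*}\in B^{*}}|\sigma_{A}(x^{*})|\le|A|<\infty$, so $A\mapsto(\sigma_{A}(x^{*}))_{x^{*}\in B^{*}}$ maps $S$ into $\ell^{\infty}(B^{*})$. For $\Omega$ I would take the Stone--\v{C}ech compactification of $B^{*}$ equipped with the \emph{discrete} topology; then $\Omega$ is compact Hausdorff and $C(\Omega)$ is isometric to $\ell^{\infty}(B^{*})$ via an isomorphism that also preserves the lattice operations, so $j(A):=(\sigma_{A}(x^{*}))_{x^{*}\in B^{*}}$ may be regarded as an element of $C(\Omega)$. (For $S=ck(X)$ one could instead use the more economical $\Omega=(B^{*},w^{*})$, compact Hausdorff by Banach--Alaoglu, since for norm-compact $A$ the function $\sigma_{A}$ is already $w^{*}$-continuous on $B^{*}$; for $cwk(X)$ this continuity can fail, which is why I would use the ``large'' $\Omega$ to treat both cases at once.)

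Granting this set-up, properties (\ref{labu}.a) and (\ref{labu}.d) are routine. Positive homogeneity and additivity, $\sigma_{\alpha A+\beta C}=\alpha\sigma_{A}+\beta\sigma_{C}$ for $\alpha,\beta\ge0$, follow directly from the definition of the Minkowski sum, giving (\ref{labu}.a). For (\ref{labu}.d) I would first note that $\mathrm{co}(A\cup C)=\{\lambda a+(1-\lambda)c:\lambda\in[0,1],\,a\in A,\,c\in C\}$ is again a member of $S$ (it is the image of the compact set $[0,1]\times A\times C$ under $(\lambda,a,c)\mapsto\lambda a+(1-\lambda)c$, continuous for the relevant topologies, hence convex and weakly, resp.\ norm, compact, in particular closed, so no closure is needed), and then
\[
\sigma_{\mathrm{co}(A\cup C)}(x^{*})=\sup_{\lambda\in[0,1]}\bigl(\lambda\,\sigma_{A}(x^{*})+(1-\lambda)\,\sigma_{C}(x^{*})\bigr)=\max\{\sigma_{A}(x^{*}),\sigma_{C}(x^{*})\},
\]
so $j(\mathrm{co}(A\cup C))$ equals the pointwise maximum of $j(A)$ and $j(C)$ on $B^{*}$, which under $C(\Omega)\cong\ell^{\infty}(B^{*})$ is exactly the lattice maximum in $C(\Omega)$.

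The core is the isometry property (\ref{labu}.b), i.e.\ H\"ormander's equality $h(A,C)=\|j(A)-j(C)\|_{\infty}=\sup_{x^{*}\in B^{*}}|\sigma_{A}(x^{*})-\sigma_{C}(x^{*})|$. I would prove the one-sided identity $e(A,C)=\sup_{x^{*}\in B^{*}}\bigl(\sigma_{A}(x^{*})-\sigma_{C}(x^{*})\bigr)$ and take the maximum with its symmetric counterpart. The inequality ``$\le$'' is elementary: for $x^{*}\in B^{*}$, $a\in A$, $c\in C$ one has $\langle x^{*},a\rangle\le\|a-c\|+\sigma_{C}(x^{*})$, hence $\langle x^{*},a\rangle\le d(a,C)+\sigma_{C}(x^{*})\le e(A,C)+\sigma_{C}(x^{*})$; taking suprema gives $\sigma_{A}(x^{*})-\sigma_{C}(x^{*})\le e(A,C)$. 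The inequality ``$\ge$'' is where the only genuine argument appears: given $a\in A$ with $r:=d(a,C)>0$, the open ball $B(a,r)$ is convex and disjoint from the closed convex set $C$, so the Hahn--Banach separation theorem produces $y^{*}\in X^{*}$ with $\|y^{*}\|=1$ and $\sigma_{C}(y^{*})+r\le\langle y^{*},a\rangle\le\sigma_{A}(y^{*})$, whence $\sup_{x^{*}}(\sigma_{A}(x^{*})-\sigma_{C}(x^{*}))\ge d(a,C)$; taking the supremum over $a\in A$ finishes the estimate. With (\ref{labu}.b) established, $j$ is distance preserving, in particular injective, hence an isometry of $(S,h)$ onto $j(S)$; and for (\ref{labu}.c) I would invoke the completeness of $(S,h)$ when $X$ is Banach (a standard fact on these hyperspaces, see \cite{cv,papalibro}), so that $j(S)$, being an isometric copy of a complete space, is closed in $C(\Omega)$.

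I expect the main obstacle to be the separating-functional step in H\"ormander's equality, together with the bookkeeping needed to identify $C(\Omega)$ with $\ell^{\infty}(B^{*})$ as \emph{ordered} Banach spaces, so that the pointwise maximum on $B^{*}$ is genuinely the lattice operation of $C(\Omega)$ required in (\ref{labu}.d); everything else reduces to direct manipulations of suprema and to citing Banach--Alaoglu, the Stone--\v{C}ech compactification, and the completeness of the hyperspace $(S,h)$. A secondary point to be careful about is precisely the weak$^{*}$-continuity issue mentioned above: it is harmless for $ck(X)$ but obstructs the ``economical'' choice of $\Omega$ for $cwk(X)$, so the uniform statement really does call for the larger compactification.
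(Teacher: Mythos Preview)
The paper does not prove this lemma: it is stated with a citation to \cite[Theorems 5.6 and 5.7]{L1} and used as a black box throughout. So there is no ``paper's own proof'' to compare against; your proposal supplies what the paper deliberately outsources.

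Your argument is the standard support-function realization of the R{\aa}dstr\"om embedding and is correct. A couple of small points worth tightening if you write it out in full: in the separation step you should be explicit about the direction of the inequality and the normalization, since geometric Hahn--Banach applied to the open ball $B(a,r)$ and the convex set $C$ gives $y^{*}\neq 0$ with $\sup_{z\in B(a,r)}\langle y^{*},z\rangle\le\inf_{c\in C}\langle y^{*},c\rangle$, and one then passes to $x^{*}=-y^{*}/\|y^{*}\|\in B^{*}$ to obtain $\sigma_{A}(x^{*})-\sigma_{C}(x^{*})\ge d(a,C)$; your wording ``$\sigma_{C}(y^{*})+r\le\langle y^{*},a\rangle$'' presumes this sign choice has already been made. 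Also, for (\ref{labu}.d) in the $cwk(X)$ case, the image of $[0,1]\times A\times C$ under $(\lambda,a,c)\mapsto\lambda a+(1-\lambda)c$ is weakly compact because the map is norm-to-norm continuous hence weak-to-weak continuous and $[0,1]\times A\times C$ is weakly compact; you assert this but it is the one place where the ``relevant topologies'' phrase hides a small argument. Finally, the identification $C(\beta B^{*})\cong\ell^{\infty}(B^{*})$ is indeed a Banach-lattice isomorphism (extension by Stone--\v Cech preserves pointwise order), so your handling of (\ref{labu}.d) is legitimate.
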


\begin{definition}\label{ex2.9} \rm 
A multifunction $F:T\rightarrow b(X)$
is called \textit{bounded} if there exists $M\in \mathbb{R}_0^{+}$
 such that 
$|F(t)|\leq M$, for every $t\in T. $
\end{definition}

\begin{definition}\label{ex2.10} \rm 
 Let $\upsilon :\mathcal{A}\rightarrow \mathbb{R}_0^+$ be a non-negative set function and let 
$F:T\rightarrow \mathcal{P}_{0}(X)$ be a multifunction.
\begin{enumerate}[\rm \ref{ex2.10}.i)]
\item $F$ is called $\upsilon $\textit{-totally-measurable} (\textit{on} $T)$
if for every $\varepsilon >0$ there exists a finite family $\{A_{i}\}_{i \in \{0, \ldots, n\}}$ 
of nonempty pairwise disjoint subsets of $T$, such that
$\bigcup\limits_{i=0}^{n}A_{i}=T$,
 $\upsilon(A_{0})<\varepsilon \;$ and
 $\sup\limits_{t,s\in A_{i}}h(F(t),F(s))=osc(F,A_{i})<\varepsilon ,\;$
for\ every $i \in \{1, \ldots, n\}.$
\item $F$ is called $\upsilon $\textit{-totally-measurable on }$B\in \mathcal{A
}$ if the restriction $f|_{B}$ of $f$ to $B$ is $\upsilon $-totally
measurable on $(B,\mathcal{A}_{B},\upsilon _{B})$, where $\mathcal{A}
_{B}=\{A\cap B;A\in \mathcal{A}\}$ and $\upsilon _{B}=\upsilon |_{\mathcal{A}
_{B}}$.
\end{enumerate}

\end{definition}
\section{A Birkhoff simple integral}\label{three}
In the sequel, suppose $\mu :\mathcal{A}\rightarrow \mathbb{R}_0^+$ is a
non-negative set function with $\mu (T)>0$
and the cardinality of $T$ is greater than $\aleph_0$.
 The Birkhoff multivalued integral that will be extended is the following:
\begin{definition}\rm \label{2.1cr2004} (\cite[Definition 2.1]{cr2004})
Let $F: T \to cwk(X)$ be a multifunction and
$\mu$ is a non negative countably additive measure 
defined on a $\sigma$-algebra  $\mathcal{A}$.
$F$ is Birkhoff integrable if the single valued function $j \circ F: T \to C(\Omega)$ is Birkhoff integrable.
(Observe that the Authors proved that the definition is indipendent of the embedding used, in the paper they used  $l_{\infty}(B_{X^*})$.)
\end{definition}
To this aim  a slight modification of  \cite[Definition 3.1]{cgsub1} is proposed:
\begin{definition}\label{ex3.5} \rm 
 Let $F:T\rightarrow b(X)$ be a
multifunction and let $\mathcal{C}$ be a non empty subfamily of  $b(X)$.  Let also $\mu:\mathcal{A}\to \mathbb{R}_0^+$ be a
non negative set function with $\mu(T)>0$, defined in a $\sigma$-algebra  $\mathcal{A}$. $F$
is called {\em Birkhoff simple integrable} in $\mathcal{C}$ on 
$T$ if there exists a closed set $I\in \mathcal{C} $ with the following property: for
every $\varepsilon >0$, there exist a countable partition
 $P_{\varepsilon }\subset {\mathcal C}$
of $T$ 
so that for every countable
partition $P=\{A_{n}\}_{n\in \mathbb{N}}$ of $T$, with $P\geq P_{\varepsilon
}$ and every $t_{n}\in A_{n},n\in \mathbb{N}$, one has 
\begin{eqnarray}\label{maxlim1}
 \limsup_n h\left(\sum_{i=1}^nF(t_i)\mu(A_i),I\right)\leq \vp.
\end{eqnarray}
The set $I$ is called the \textit{Birkhoff simple integral in $\mathcal{C}$ of }$F$ 
\textit{\ on }$T$ \textit{\ with respect to} $\mu $ and is denoted by $(B_s)\int_{T}Fd\mu .$
\\
The simple Birkhoff integrability/integral of $F$ on a set $A\in \mathcal{A}$ is defined in the usual 
 manner.
\end{definition}
\begin{remark}\rm 
Observe that, in \cite{cgsub1}, the definition does not make use of the $\limsup$ operation, but requires instead that for every $\varepsilon>0$ there exist a countable partition $P_{\varepsilon }$ and a suitable integer $n_{\varepsilon}$ such that 
$$h\left(\sum_{i=1}^nF(t_i)\mu(A_i),I\right)\leq \vp$$
holds, as soon as $P=\{A_{n}\}_{n\in \mathbb{N}}$ is finer than 
$P_{\varepsilon }$  and $n\geq n_{\vp}$.
So the Definition \ref{ex3.5} looks more intuitive, and at least formally weaker, since it avoids the condition of uniformity about the integer $n_{\vp}$.\\
\end{remark}

\begin{corollary}\label{jFsB}
Let $F : T \to cwk(X)$ be a multifunction. Then $F$ is Birkhoff simple integrable  in $cwk(X)$ if and only if $j \circ F$ is
Birkhoff simple integrable.
\end{corollary}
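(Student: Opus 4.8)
The plan is to transport everything through the R{\aa}dstr\"om embedding $j$ of Lemma~\ref{labu}, using that $\mu\ge 0$, so that every scalar $\mu(A_i)$ appearing below is admissible in property~(\ref{labu}.a). The first thing I would record are two identities valid for any tags $t_1,\dots,t_n\in T$ and pairwise disjoint $A_1,\dots,A_n\in\mathcal A$:
\[
j\left(\sum_{i=1}^n F(t_i)\,\mu(A_i)\right)=\sum_{i=1}^n (j\circ F)(t_i)\,\mu(A_i),
\]
which follows by iterating~(\ref{labu}.a) (on the left $\sum$ is the closed Minkowski sum in $cwk(X)$, on the right it is the sum in $C(\Omega)$), and
\[
h\left(\sum_{i=1}^n F(t_i)\,\mu(A_i),\,I\right)=\left\|\sum_{i=1}^n (j\circ F)(t_i)\,\mu(A_i)-j(I)\right\|_\infty
\]
for every $I\in cwk(X)$, which is just~(\ref{labu}.b). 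Summands with $\mu(A_i)=0$ are harmless, since they reduce to $\{0\}$ on the left and to $0$ on the right. Together these identities turn the expression under the $\limsup$ in inequality~(\ref{maxlim1}) into the corresponding expression for $j\circ F$ with $I$ replaced by $j(I)$.

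For the implication ``$F$ Birkhoff simple integrable in $cwk(X)$ $\Rightarrow$ $j\circ F$ Birkhoff simple integrable'', I would take the integral $I\in cwk(X)$ and, for each $\varepsilon>0$, the partition $P_\varepsilon$ provided by Definition~\ref{ex3.5}; by the identities above the same $P_\varepsilon$ witnesses the Birkhoff simple integrability of $j\circ F$, with integral $j(I)$.

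For the converse I would begin with a Birkhoff simple integral $\varphi\in C(\Omega)$ of $j\circ F$ and first show $\varphi\in j(cwk(X))$. Fix $\varepsilon>0$ and the associated partition $P_\varepsilon$; for any refinement $P=\{A_n\}_n$ and tags $t_n$, the first identity exhibits each partial sum $\sum_{i=1}^n (j\circ F)(t_i)\mu(A_i)$ as the $j$-image of a finite Minkowski combination of members of $cwk(X)$ with non-negative coefficients, hence as the $j$-image of an element of $cwk(X)$. Thus all these partial sums lie in $j(cwk(X))$, and since their $\limsup$-distance to $\varphi$ is at most $\varepsilon$, we get $\mathrm{dist}(\varphi,j(cwk(X)))\le\varepsilon$. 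Letting $\varepsilon\to 0$ and using that $j(cwk(X))$ is norm closed in $C(\Omega)$ by~(\ref{labu}.c), I obtain $\varphi=j(I)$ for a (necessarily closed) set $I\in cwk(X)$. Substituting $\varphi=j(I)$ into the two identities shows that $I$ witnesses the Birkhoff simple integrability of $F$ in $cwk(X)$, with $(B_s)\int_T F\,d\mu=I$.

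I expect the only genuine obstacle to lie in this last step of the converse: one has to be sure that the candidate integral $\varphi$ of $j\circ F$ is really the image of a set in $cwk(X)$, not merely an arbitrary element of $C(\Omega)$. This is exactly what the norm-closedness of $j(cwk(X))$ secures, together with the elementary observation that finite Minkowski sums of convex weakly compact sets, scaled by non-negative reals, remain in $cwk(X)$ --- which is what keeps the approximating partial sums inside $j(cwk(X))$.
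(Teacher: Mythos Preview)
Your proof is correct and follows essentially the same route as the paper's proof, relying on the identity $h\bigl(\sum_{i=1}^n F(t_i)\mu(A_i),I\bigr)=\bigl\|\sum_{i=1}^n (j\circ F)(t_i)\mu(A_i)-j(I)\bigr\|_\infty$ together with the norm-closedness of $j(cwk(X))$ from Lemma~\ref{labu}. The paper simply states these ingredients in one line, whereas you spell out explicitly (and correctly) why the converse integral $\varphi$ must lie in $j(cwk(X))$.
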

\begin{proof}
It follows directly from Lemma \ref{labu} since $j(cwk(X))$   is norm closed in $C (\Omega)$ and 
$j$ is norm preserving and additive,
namely
\[ h\left(\sum_{i=1}^n F(t_i)\mu(A_i),I\right) = \| \sum_{i=1}^n j \circ F(t_i) \mu(A_i) - j(I) \|_{\infty} \].
\end{proof}

Observe moreover that the Birkhoff simple integral is additive, in fact:
\begin{theorem}\label{ex3.7}
 If $\mu$ is finitely additive, and $F:T\rightarrow b(X)$  is Birkhoff simple integrable
with respect to  $\mathcal{C}$ 
 on a pair of disjoint sets  $A,B\in \mathcal{A}$, 
then $F$is Birkhoff simple integrable
in $\mathcal{C}$ on $A\cup B$ too and 
$$(B_s)\int_{A\cup B}Fd\mu = (B_s)\int_{A}Fd\mu \oplus (B_s)\int_{B}Fd\mu . $$
\end{theorem}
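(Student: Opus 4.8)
The plan is to produce the integral on $A\cup B$ explicitly, taking $I:=I_A\oplus I_B$ with $I_A:=(B_s)\int_A Fd\mu$ and $I_B:=(B_s)\int_B Fd\mu$, and then to verify Definition~\ref{ex3.5} for this $I$. First I would observe that, since $\oplus$ denotes the norm closure of the Minkowski sum of the two bounded sets $I_A,I_B$, the set $I$ is automatically closed and bounded; it belongs to $\mathcal C$ as soon as $\mathcal C$ is stable under $\oplus$, which is the case for each of $b(X),bf(X),bcf(X),ck(X),cwk(X)$. The hypothesis that $\mu$ is finitely additive serves only to make the three localisations mutually consistent: for a piece $C\subseteq A$ one has $\mu_A(C)=\mu(C)=\mu_{A\cup B}(C)$, so the Birkhoff-type sums occurring in the three problems agree termwise.

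Next, fix $\varepsilon>0$. By Birkhoff simple integrability of $F$ on $A$ and on $B$ choose countable ($\mathcal A$-measurable) partitions $P^A$ of $A$ and $P^B$ of $B$ realising \eqref{maxlim1} for $I_A$ on $A$ and for $I_B$ on $B$, with $\varepsilon/2$ in the role of $\varepsilon$. Since $A\cap B=\emptyset$, $P_\varepsilon:=P^A\cup P^B$ is a countable partition of $A\cup B$, and I claim it witnesses \eqref{maxlim1} for $I$ on $A\cup B$. Indeed, let $Q=\{C_k\}_k$ be any countable partition of $A\cup B$ finer than $P_\varepsilon$, with tags $s_k\in C_k$. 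Each $C_k$ lies inside a member of $P_\varepsilon$, hence entirely in $A$ or entirely in $B$, so $Q=Q_A\cup Q_B$, where $Q_A$ (the pieces contained in $A$) and $Q_B$ (those contained in $B$) are countable partitions of $A$ and $B$ finer than $P^A$ and $P^B$, carrying the induced tags. Putting $\alpha_k=\{i\le k:C_i\subseteq A\}$ and $\beta_k=\{i\le k:C_i\subseteq B\}$, associativity and commutativity of $\oplus$ give
\[\sum_{i=1}^k F(s_i)\mu(C_i)=\left(\sum_{i\in\alpha_k}F(s_i)\mu(C_i)\right)\oplus\left(\sum_{i\in\beta_k}F(s_i)\mu(C_i)\right)=:u_k\oplus v_k,\]
and therefore, by Proposition~\ref{ex2.8}.ii),
\[h\left(\sum_{i=1}^k F(s_i)\mu(C_i),\,I\right)=h(u_k\oplus v_k,\,I_A\oplus I_B)\le h(u_k,I_A)+h(v_k,I_B).\]

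The key point — and the step I expect to require the most care — is purely a matter of bookkeeping with partial sums. Since $\alpha_k$ is an initial segment of the list of $A$-indices of $Q$, the set $u_k$ is exactly the $|\alpha_k|$-th partial sum of the Birkhoff sum attached to $Q_A$ with its induced enumeration and tags, and $|\alpha_k|\to\infty$ as $k\to\infty$; hence $(h(u_k,I_A))_k$ has the same tail behaviour as the sequence of $h$-distances of those partial sums from $I_A$, so $\limsup_k h(u_k,I_A)$ equals that $\limsup$, which is $\le\varepsilon/2$ because $Q_A\ge P^A$. Symmetrically $\limsup_k h(v_k,I_B)\le\varepsilon/2$, and combining with the last displayed inequality yields $\limsup_k h\left(\sum_{i=1}^k F(s_i)\mu(C_i),I\right)\le\varepsilon$, which is \eqref{maxlim1} for $I$ on $A\cup B$. (If $Q_A$ or $Q_B$ is finite the corresponding $\limsup$ is attained as an honest finite value, so no separate treatment is needed.) Everything else — the splitting of the finite Minkowski sum, the membership $u_k\oplus v_k\in b(X)$, the triangle-type estimate — is immediate from Proposition~\ref{ex2.8} and elementary properties of $h$ and $\oplus$; the whole difficulty sits in matching the interleaved partial sums on $A\cup B$ with the genuine partial sums of $Q_A$ and $Q_B$, which is what lets the $\limsup$ in \eqref{maxlim1} be controlled by the $\limsup$s already guaranteed by integrability on $A$ and on $B$.
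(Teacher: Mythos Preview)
Your argument is correct and is precisely the natural route: merge the witnessing partitions $P^A,P^B$ into a countable partition of $A\cup B$, split any finer partition piecewise into its $A$- and $B$-parts, apply Proposition~\ref{ex2.8}.ii) to get $h(u_k\oplus v_k,I_A\oplus I_B)\le h(u_k,I_A)+h(v_k,I_B)$, and then use subadditivity of $\limsup$ together with the observation that $(u_k)_k$ and $(v_k)_k$ run (with harmless repetitions) through the partial sums of $Q_A$ and $Q_B$. The paper does not give an independent proof but simply refers to \cite[Theorem~4.1]{cgsub1}; your proof is the expected adaptation of that argument to the $\limsup$ formulation of Definition~\ref{ex3.5}. (Your parenthetical about finite $Q_A$ or $Q_B$ is in fact unnecessary: since $P^A$ and $P^B$ are countably infinite, any finer partitions $Q_A$ and $Q_B$ must be infinite as well.)
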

\begin{proof}
The proof works as in \cite[Theorem 4.1]{cgsub1}.
\end{proof}

In order to prove that Definition \ref{ex3.5} is an extension of
 Definition \ref{2.1cr2004}  when $\mu$ is a non-negative countably additive measure and $\mathcal{C}$ is $cwk(X)$ or $ck(X)$ a comparison with the Gould integral is needed. 
This result will be obtained for the case of finite or $\sigma$-finite
non negative, (countably) additive measure.
\subsection{The case of  finite measure $\mu$}\label{three.1}
The first step is to compare the Gould and simple Birkhoff integrability in the case of $\mu$ finite. 
 The notion of Gould integral has been introduced in Definition  \ref{ex3.1}. 

\noindent In case $F$ takes values in one of the spaces $ck(X)$ or $cwk(X)$, thanks to completeness of these spaces with respect to the Hausdorff distance, the Gould integral, if it exists, is in the same space $ck(X)$ or $cwk(X)$ respectively. Indeed
\begin{proposition}\label{integraleck}
Assume that $F:T\to cwk(X)$ is Gould integrable, then the integral $I$ is in $cwk(X)$. The same holds for $ck(X)$-valued mappings.
\end{proposition}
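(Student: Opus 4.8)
The idea is that, although Definition~\ref{ex3.1} only guarantees a priori that the Gould integral $I$ belongs to $bf(X)$, every Riemann-type sum approximating $I$ actually lies in the smaller family $cwk(X)$, which is \emph{closed} in $(bf(X),h)$; hence the limit $I$ is forced to lie there as well. Note first that on $bf(X)$ the functional $h$ is a genuine metric (two closed bounded sets with $h$-distance zero coincide), so limits in $(bf(X),h)$ are unique. Everything then reduces to completeness of $(cwk(X),h)$, which is exactly what the R{\aa}dstr\"om embedding of Lemma~\ref{labu} provides.

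The plan is as follows. For each $n\in\mathbb{N}$ apply Gould integrability with $\varepsilon=1/n$: there is a finite partition $P_{1/n}$ of $T$ whose members have finite measure. Choose any finite partition $P_n=\{A^n_1,\dots,A^n_{k_n}\}\geq P_{1/n}$ and any tags $t^n_i\in A^n_i$, and set $\sigma_n:=\sum_{i=1}^{k_n}F(t^n_i)\,\mu(A^n_i)$, so that $h(\sigma_n,I)<1/n$. Each $\sigma_n$ belongs to $cwk(X)$: the coefficients $\mu(A^n_i)$ are non-negative, so each $\mu(A^n_i)F(t^n_i)$ is convex and weakly compact, and a finite Minkowski sum of convex weakly compact sets is again convex and weakly compact (the addition map $X\times X\to X$ is weak-to-weak continuous, and a finite product of weakly compact sets is weakly compact), in particular norm closed; thus the closure implicit in the symbol $\sum$ adds nothing and $\sigma_n\in cwk(X)$. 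This verification is routine.

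Since $h(\sigma_n,I)<1/n$ we get $h(\sigma_n,\sigma_m)<\tfrac1n+\tfrac1m$, so $(\sigma_n)_n$ is Cauchy in $(cwk(X),h)$, which is complete by Lemma~\ref{labu}; hence there is $J\in cwk(X)$ with $h(\sigma_n,J)\to 0$. As $h$ is a metric on $bf(X)$ and $\sigma_n\to I$ there as well, uniqueness of limits yields $I=J\in cwk(X)$. The case $F:T\to ck(X)$ is verbatim the same, with "weakly compact" replaced by "compact" (finite Minkowski sums of compact convex sets are compact convex, and Lemma~\ref{labu} applies equally to $ck(X)$). The only genuine point of the argument — the obstacle, such as it is — is the upgrade from "closed and bounded" to "convex and (weakly) compact", and this rests entirely on $cwk(X)$ (resp. $ck(X)$) being a complete, hence closed, subspace of $(bf(X),h)$; a minor technical remark is that it suffices to approximate $I$ along the sequence $\varepsilon=1/n$, since closedness in a metric space is a sequential property.
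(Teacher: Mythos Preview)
Your proof is correct and follows essentially the same approach as the paper: construct a sequence of Riemann-type sums $\sigma_n\in cwk(X)$ with $h(\sigma_n,I)<1/n$, invoke completeness of $(cwk(X),h)$ to obtain a limit $J\in cwk(X)$, and conclude $I=J$. The only cosmetic difference is in the final identification: you appeal directly to uniqueness of limits in the metric space $(bf(X),h)$, whereas the paper re-verifies that the limit $I_0$ satisfies the definition of the Gould integral and then (implicitly) uses uniqueness of that integral; your route is slightly more direct.
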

\begin{proof}
Only the case of $cwk(X)$-valued functions will be discussed, the other one is quite similar and is contained in \cite[Remark 3.7]{pgcsub}.\\
Let $I$ denote the Gould integral of $F$, and fix arbitrarily $k\in \mathbb{N}$. Then there exists a finite partition 
$P_k$ of $T$, $P_k:=\{E_1^k, \ldots, E^k_{n_k}\}$, such that, for every finer partition 
$P:=\{B_1,...,B_m\}$ and every choice of points $t_j\in B_j,j=1,...,m$, it holds
$$h(\sum_{j=1}^mF(t_j)\mu(B_j),I)\leq \frac{1}{k}.$$
In particular, as soon as points $\tau_1\in E^k_1,...,\tau_{n_k}\in E^k_{n_k}$ are fixed, one has
$$h(\sum_{i=1}^{n_k}F(\tau_i)\mu(E^k_i),I)\leq \frac{1}{k}.$$
Now, setting $s_k:=\sum_{i=1}^{n_k}F(\tau_i)\mu(E^k_i)$, the elements $s_k$ are in $cwk(X)$, and clearly they form a Cauchy sequence with respect to the Hausdorff distance. So, by completeness of $(cwk(X),h)$, there exists an element $I_0\in cwk(X)$ such that $\lim_k s_k=I_0$ in that space. Hence, for each $\vp>0$ there exists an integer $k_{\vp}$ such that $h(s_k,I_0)\leq \vp$ for every $k\geq k_{\vp}$.
\\
The element $I_0$ is the Gould integral of $F$. 
Indeed, as soon as $\vp>0$ is fixed, choose any integer $k^*$ larger than $k_{\vp}\vee\frac{1}{\vp}$. Then, for every partition $P:=\{B_1,...,B_m\}$ finer than $P_{k^*}:=\{E_1^{k^*}, \ldots, E^{k^*}_{n_{k^*}}\}$,
 and every choice of points $t_j\in B_j,j=1,...,m$, it holds
\begin{eqnarray*}
h(\sum_{j=1}^mF(t_j)\mu(B_j),I_0) &\leq& h(\sum_{j=1}^mF(t_j)\mu(B_j),
I) + h(I,s_{k^*})+h(s_{k^*},I_0)\leq \\ &\leq&
 \frac{2}{k^*}+\vp\leq 3\vp.
\end{eqnarray*}
This clearly suffices to prove the assertion.

\end{proof}

\begin{remark}\rm \label{jFG}
Observe that, when the mapping $F$ is single-valued, this definition is the same as 
{\em RLF-integrability} of \cite[Definition 13]{pot2007}. Moreover, the notion of  RLF-integrability can be adopted for the case of $cwk(X)$- or $ck(X)$-valued maps, 
as in Corollary \ref{jFsB},
thanks to the R{\aa}dstr\"{o}m embedding theorem (Lemma \ref{labu})
since, by Proposition \ref{integraleck},
the integral lives in the same hyperspace, $cwk(X)$ or $ck(X)$. 
Indeed, since the embedding $j$ of $ck(X)$ or $cwk(X)$  is an isometry, it is clear that
$$h(\sum_{i=1}^n F(t_{i})\mu (A_{i}),I)=\|\sum_{i=1}^n j(F(t_{i}))\mu (A_{i})-j(I)\|_{\infty}$$ for all partitions $\{A_n:n\in \mathbb{N}\}$ and choices $t_n\in A_n$, so it follows that any $cwk(X)$- or $ck(X)$-valued mapping $F$ is Gould-integrable if and only if $j(F)$ is, as a single-valued function (as was done in  Corollary \ref{jFsB} for the Birkhoff simple integral).

\end{remark}

\begin{theorem}\label{ex4.1}
Suppose $\mathcal{A}$ is a $\sigma$-algebra, $\mu :\mathcal{A}\rightarrow \lbrack 0,\infty )$ 
is a countably additive measure and $F:T\rightarrow b(X)$
is a bounded multifunction. Then $F$ is Birkhoff simple integrable if and only if it is $G$-integrable and 
$$(B_s)\int_{T}Fd\mu =(G)\int_{T}Fd\mu .$$
\end{theorem}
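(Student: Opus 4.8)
The plan is to prove the two implications separately and in each case pass through an $\varepsilon$-approximated finite partition of $T$, exploiting the boundedness of $F$ to control the ``tail'' of countable partitions. Throughout, let $M$ be a bound for $|F|$, i.e.\ $|F(t)|\le M$ for all $t\in T$, and recall that since $\mu$ is a finite countably additive measure, it is integrable in the sense of Definition \ref{muintegrabile} and continuous from below, so Proposition \ref{psisigmaadd} applies to $\Psi$; also $\Psi=\mu$ here because $\mu$ itself is countably additive, but the machinery of Remark \ref{fatti-p} (Henstock-type lemma) is what I will actually use.

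\medskip
\textbf{Gould $\Rightarrow$ Birkhoff simple.} Suppose $F$ is $G$-integrable with integral $I$; by Proposition \ref{integraleck}-type reasoning $I\in bf(X)$. Fix $\varepsilon>0$ and take a finite partition $P_\varepsilon=\{E_1,\dots,E_n\}$ of $T$ witnessing Gould integrability for the value $\varepsilon/2$, which we may also assume is an $\varepsilon$-approximated partition for $\mu$ (refine if necessary). I claim $P_\varepsilon$, viewed as a countable partition, witnesses Birkhoff simple integrability. Let $P=\{A_k\}_{k\in\mathbb N}\ge P_\varepsilon$ be any countable refinement and choose $t_k\in A_k$. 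For each $N$, the finite family $\{A_1,\dots,A_N\}\cup\{T\setminus\bigcup_{k\le N}A_k\}$ is a finite partition finer than $P_\varepsilon$ once $N$ is large enough, so adding a tag $s$ in the leftover set gives
\[
h\Big(\sum_{k=1}^N F(t_k)\mu(A_k)\oplus F(s)\,\mu\big(T\setminus\textstyle\bigcup_{k\le N}A_k\big),\,I\Big)\le \frac{\varepsilon}{2}.
\]
By Lemma \ref{prop-h}, removing the last summand costs at most $|F(s)|\,\mu(T\setminus\bigcup_{k\le N}A_k)\le M\,\mu(T\setminus\bigcup_{k\le N}A_k)$, which tends to $0$ as $N\to\infty$ by countable additivity. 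Hence $\limsup_N h(\sum_{k=1}^N F(t_k)\mu(A_k),I)\le \varepsilon/2<\varepsilon$, which is \eqref{maxlim1}.

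\medskip
\textbf{Birkhoff simple $\Rightarrow$ Gould.} Now suppose $F$ is Birkhoff simple integrable with integral $I$. Fix $\varepsilon>0$ and let $P_\varepsilon=\{B_m\}_{m\in\mathbb N}$ be the countable partition from Definition \ref{ex3.5} for $\varepsilon/4$. Choose $m_0$ large enough that $\mu(\bigcup_{m>m_0}B_m)<\varepsilon/(4M)$ (countable additivity), and set $Q_\varepsilon:=\{B_1,\dots,B_{m_0},\bigcup_{m>m_0}B_m\}$, a \emph{finite} partition of $T$; refine it so that it is also an $\varepsilon$-approximated partition for $\mu$. I claim $Q_\varepsilon$ witnesses Gould integrability. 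Let $Q=\{C_1,\dots,C_p\}\ge Q_\varepsilon$ be a finite refinement with tags $\tau_j\in C_j$. Extend $Q$ to a countable refinement $P$ of $P_\varepsilon$ by splitting the big block $\bigcup_{m>m_0}B_m$ further according to the $B_m$'s (this is possible precisely because $Q\ge Q_\varepsilon$ and $Q_\varepsilon$ was built from $P_\varepsilon$), keeping the original tags on the $C_j$ contained in $\bigcup_{m\le m_0}B_m$ and choosing arbitrary tags elsewhere. Applying \eqref{maxlim1} to $P$ and splitting the Minkowski sum into the part indexed by the $C_j\subseteq\bigcup_{m\le m_0}B_m$ and the (finitely many, for each truncation level) remaining terms, then using Lemma \ref{prop-h} repeatedly to discard the latter — whose total ``mass'' is $\le M\,\mu(\bigcup_{m>m_0}B_m)<\varepsilon/4$ — one obtains $h(\sum_{j=1}^p F(\tau_j)\mu(C_j),I)\le \varepsilon/4+\varepsilon/4+\varepsilon/4<\varepsilon$. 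Since $Q$ and the tags were arbitrary, $F$ is $G$-integrable with integral $I$, and the two integrals coincide.

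\medskip
\textbf{Main obstacle.} The delicate point is the second implication: turning a countable partition into a finite one and, conversely, matching an arbitrary \emph{finite} refinement $Q$ of $Q_\varepsilon$ with a legitimate countable refinement of $P_\varepsilon$ so that \eqref{maxlim1} can be invoked. One must be careful that the $\limsup$ in \eqref{maxlim1} only controls the truncated sums $\sum_{i=1}^n F(t_i)\mu(A_i)$, not the ``full'' sum, so the argument has to interleave the truncation index $n$ with the finite index $p$ and use boundedness of $F$ together with $\mu(\bigcup_{m>m_0}B_m)$ being small to absorb all discrepancies uniformly — this is where Lemma \ref{prop-h} and the finiteness of $\mu$ are essential. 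The rest is routine bookkeeping with the Hausdorff metric via Proposition \ref{ex2.8}.
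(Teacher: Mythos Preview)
Your approach is essentially the same as the paper's, and the overall strategy is correct. There is, however, one concrete slip in the Gould $\Rightarrow$ Birkhoff simple direction that you should repair.

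You claim that for $N$ large enough the finite family $\{A_1,\dots,A_N\}\cup\{T\setminus\bigcup_{k\le N}A_k\}$ is finer than $P_\varepsilon=\{E_1,\dots,E_n\}$. This need not be true: the leftover set $\bigcup_{k>N}A_k$ can meet several of the $E_i$ nontrivially for every $N$ (think of a countable refinement whose pieces alternate between two $E_i$'s). The paper handles this by splitting the leftover along the $E_i$'s, i.e.\ using the finite partition
\[
\{A_1,\dots,A_N\}\cup\bigl\{E_i\cap\textstyle\bigcup_{k>N}A_k:\ i=1,\dots,n\bigr\},
\]
which \emph{is} finer than $P_\varepsilon$. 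The removal cost via Lemma \ref{prop-h} is still at most $M\cdot\mu(\bigcup_{k>N}A_k)\to 0$, so your estimate goes through unchanged once you make this adjustment.

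Two further remarks. First, the references to $\varepsilon$-approximated partitions and the Henstock-type lemma of Remark \ref{fatti-p} are superfluous here: since $\mu$ is countably additive you never actually invoke them, and the paper's proof does not either. Second, in the Birkhoff simple $\Rightarrow$ Gould direction your extension of $Q$ to a countable refinement of $P_\varepsilon$ is a slight variant of the paper's construction (the paper discards the $C_j$'s lying in the big block and reinstates the original $B_m$, $m>m_0$, whereas you intersect those $C_j$'s with the $B_m$'s); both work, and the arithmetic with $\varepsilon/4$ is the same.
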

\begin{proof} 
Assume that $F$ is simple-Birkoff integrable, with integral $I$, and fix $\vp>0$.
 Then there exists a countable partition $P^1:=\{E_j:j\in \mathbb{N} \}$ 
such that, for every countable partition $P^2$ finer than $P^1$, $P^2=\{B_j,j\in \mathbb{N}\}$, and every choice of the points $t_j\in B_j$ it is
\begin{eqnarray}\label{limsup1}
\limsup_nh(\sum_{i=1}^nF(t_i)\mu(B_i),I)\leq \frac{\vp}{4}.
\end{eqnarray}
Let $M$ be any majorant for $\sup_{t\in T}h(F(t),\{0\})$.
Then, let $N$ be any integer such that $\sum_{j> N}\mu(E_j)\leq \frac{\vp}{4M}$, and denote by $A$  the union of all the sets $E_j,j>N$. 
Clearly, for every (countable) partition of $A$, say $\{A_i,\ i\in \mathbb{N}\}$, and every choice of points $\tau_i\in A_i$ for all $i\in \mathbb{N}$, one has
\begin{eqnarray}
\label{sommeresti}  && \sum_i h(F(\tau_i)\mu(A_i),\{0\})\leq \frac{\vp}{4};\\
\label{segue} && h(\sum_{i=1}^NF(t_i)\mu(E_i),I)\leq \frac{\vp}{2}.
\end{eqnarray}
In particular, for $E_i, i \geq N$ it holds
\begin{eqnarray}\label{sommedue}
h(\sum_{i=1}^{N+n}F(t_i)\mu(E_i),\sum_{i=1}^NF(t_i)\mu(E_i))\leq \frac{\vp}{4}\end{eqnarray}
for every integer $n$, and every choice of the points $t_i\in E_i$, since the elements $E_j$, $j>N$, form a partition of $A$.\\
Let $P_{\vp}:= \{E_1, E_2, \ldots, E_N, A \}$.
 For every finite partition $P'':=\{G_1,...,G_l\}$ of $T\setminus A$, finer than $\{E_1,...,E_N\}$, , $i=1,...,l$,
consider the partition $P^*:=\{G_1,...,G_l,E_{j}, j \geq N+1\}=\{C_i, i \in \mathbb{N}\}$, finer than $P^1$ and
any choice of the points $t''_i\in G_i, \tau_k \in E_k, k>N$:
then there exists $n_1 \geq \max \{N, l \}$ such that for every $n \geq n_1$ it is 
\[ h(\sum_{i=1}^{l} F(t''_i)\mu(G_i) \oplus \sum_{i=l+1}^{n} F(\tau_i)\mu(E_i) ,I)\leq \frac{\vp}{2}
\]
Then, by Lemma \ref{prop-h} and (\ref{sommeresti}) it is

\begin{eqnarray}\label{sommefinite2}
h(\sum_{i=1}^l F(t''_i)\mu(G_i),I)\leq \frac{\vp}{2} + \frac{\vp}{4}.
\end{eqnarray}
So, if $P'$ is any finite partition, finer than $P_{\vp}$, $P'=\{H_1,..,H_u\}$, for every choice of the points $\tau_i\in H_i$, $i=1,...,u$, one clearly has
$$\sum_{i=1}^uF(\tau_i)\mu(H_i)=\sum_{H_i\subset T\setminus A} F(\tau_i)\mu(H_i) \oplus
\sum_{H_i\subset A}F(\tau_i)\mu(H_i),$$
and
$$h(\sum_{H_i\subset T\setminus A}F(\tau_i)\mu(H_i),I)\leq  \frac{3}{4}\vp$$
by (\ref{sommefinite2})
and
$$h(\sum_{H_i\subset A}F(\tau_i)\mu(H_i),\{0\})\leq \frac{\vp}{4}$$
by (\ref{sommeresti}). Then clearly one can conclude:
$$h(\sum_{i=1}^uF(\tau_i)\mu(H_i),I)\leq \vp. $$

Now, the converse implication will be proved. So, assume that $F$ is Gould-integrable, with integral $I$: then for every $\vp>0$ a finite partition $P_{\vp}$ exists, such that, for every finite finer partition $P':=\{B_1,...,B_m\}$, and every choice of the points $t_i\in B_i$, $i=1,...,m$, one has
$$h(\sum_i F(t_i)\mu(B_i),I)\leq \vp/2.$$

The partition $P_{\vp}$ is suitable also for simple Birkhoff integrability. Indeed, denote $P_{\vp}:=\{E_1,...,E_n\}$, and pick any countable finer partition $P'$, $P':=\{B_1,B_2,...\}$, together with a choice of tags $\tau_j\in B_j$, $j\in \mathbb{N}$. By countable additivity, there exists an integer $j_0$ large enough that $\mu(\bigcup_{j>j_0}B_j)\leq \frac{\vp}{4M}$.
Therefore, for every choice of the points $t_j\in B_j$ one has
\begin{eqnarray}\label{restisomme}
\sum_{j>j_0}h(F(t_j)\mu(B_j),\{0\})\leq \frac{\vp}{4}.
\end{eqnarray}
Denote by $Q$ the union of all the sets $B_j$, with $j>j_0$.
Consider now the following finite partition $P''$, finer than $P_{\vp}$: $P''$ is  obtained by taking for each index $i$ the elements of the type $B_j,\ j\leq j_0$, that are contained in $E_i$, (with the corresponding  tag $\tau_j$) and also $E_i\cap Q$, with any tag. Now, since $P''$ is finer than $P_{\vp}$,
$$h(\Sigma(F,P''),I)\leq \vp/2;$$
then, from (\ref{restisomme}) and Lemma \ref{prop-h} one clearly has
$$h(\sum_{j=1}^JF(\tau_j)\mu(B_j),I)\leq \frac{\vp}{2}+\frac{\vp}{4}\leq \vp,$$
as soon as $J\geq j_0$. This also means that
$$\limsup_Jh(\sum_{j=1}^JF(\tau_j)\mu(B_j),I)\leq \vp$$
and therefore the assertion.
\end{proof} 
In the next result, another relationship between Gould and simple Birkhoff 
integrabilities on atoms is presented. This is done in weaker hypothesis, i.e. just assuming 
that $\mu $ is $\sigma$-null-null-additive and monotone.
\begin{theorem}\label{ex4.3}
Suppose $F:T\rightarrow b(X)$  is fixed,
$\mu :\mathcal{A}\rightarrow \mathbb{R}^{+}_0$
is $\sigma$-null-null-additive and monotone and $A\in \mathcal{A}$
 is an atom of $\mu $. Then  $F$ is Birkhoff simple
integrable on $A$ if and only if  $F$ is Gould integrable on $A$
 and moreover
$$(G)\int_{A}Fd\mu =(B_s)\int_{A}Fd\mu .$$
\end{theorem}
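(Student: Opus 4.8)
The plan is to show that, on an atom, both notions of integrability collapse to one and the same elementary condition; once this is done the equivalence and the equality of the two integrals follow at once.

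\emph{Structure of partitions of the atom.} First I would record how an arbitrary (finite or countable) partition $\{B_i\}$ of $A$ in $\mathcal{A}$ behaves. By monotonicity together with the defining property of an atom, at most one $B_i$ can have positive measure; and since $\mu(A)>0$, $\sigma$-null-null-additivity forbids all the $B_i$ from being $\mu$-null. Hence exactly one ``distinguished'' piece $B^{*}$ satisfies $\mu(B^{*})>0$ while every other piece is $\mu$-null, and $B^{*}$ is itself an atom: for $D\in\mathcal{A}$ with $D\subseteq B^{*}$ one has $D\subseteq A$, so $\mu(D)=0$ or $\mu(A\setminus D)=0$, and in the latter case monotonicity gives $\mu(B^{*}\setminus D)=0$. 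Since $F$ takes values in $b(X)$, $F(t)\,\mu(B)=\{0\}$ whenever $\mu(B)=0$; therefore every Birkhoff or Gould sum built on a tagged partition of $A$ equals $\overline{F(t^{*})\,\mu(B^{*})}$, where $t^{*}$ is the tag of the distinguished atom. In particular, along a countable tagged partition the partial sums are eventually equal to this single set, so the $\limsup$ in \eqref{maxlim1} is simply $h(\overline{F(t^{*})\,\mu(B^{*})},I)$.

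\emph{Reduction to a common condition.} Next I would note that, given a fixed partition $P_{0}$ of $A$ with distinguished atom $B_{0}$, the distinguished atoms of the partitions finer than $P_{0}$ are exactly the atoms $B\subseteq B_{0}$: such a $B$ is the distinguished piece of $\{B,\,B_{0}\setminus B\}\cup(P_{0}\setminus\{B_{0}\})$ (here $\mu(B_{0}\setminus B)=0$ because $B_{0}$ is an atom and $\mu(B)>0$), and conversely the unique piece of $P_{0}$ containing a positive-measure set must be $B_{0}$. Also, for every atom $B\subseteq A$ the pair $\{B,A\setminus B\}$ is a partition of $A$ whose members have finite measure and whose distinguished atom is $B$. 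Combining this with the previous paragraph, I would prove that each of the statements ``$F$ is Gould integrable on $A$ with integral $I$'' and ``$F$ is Birkhoff simple integrable on $A$ with integral $I$'' is equivalent to
$$(\star)\qquad \forall\,\varepsilon>0\ \ \exists\ \text{an atom }B_{\varepsilon}\subseteq A:\quad h\big(\overline{F(t)\,\mu(B)},I\big)\le\varepsilon\ \ \text{for every atom }B\subseteq B_{\varepsilon}\text{ and every }t\in B,$$
the test partition $P_{\varepsilon}$ being played by $\{B_{\varepsilon},A\setminus B_{\varepsilon}\}$ (and, in the countable case, by a countable partition refining it). Since $(\star)$ makes no reference to the type of partition used, if $F$ is Gould integrable on $A$ then $(\star)$ holds for $I=(G)\int_{A}Fd\mu$, whence $F$ is Birkhoff simple integrable on $A$ with the same integral, and symmetrically; this gives the theorem.

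\emph{Main obstacle.} The genuinely delicate point is the structural paragraph, and in particular keeping the whole argument inside the weak hypotheses of the statement. Because $\mu$ need not be null-additive, one cannot assert --- as one would under the hypotheses of Proposition \ref{ex2.6} --- that the distinguished piece $B^{*}$ carries the full value $\mu(A)$; only the information ``$B^{*}$ is a positive-measure sub-atom, all other pieces are null'' is available, and the proof must be organised so that this is all it ever uses. The remaining items --- that the test partition $\{B_{\varepsilon},A\setminus B_{\varepsilon}\}$ may be turned into a countably infinite one when required, and that $I$ is a closed bounded set, hence admissible as the value of either integral --- are routine and I would dispose of them briefly.
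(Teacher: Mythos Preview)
Your proof is correct and, in one respect, more careful than the paper's own argument. The paper proceeds by proving the two implications separately: starting from a countable test partition $P_\varepsilon=\{B_n\}$ for the Birkhoff simple integral, it invokes Proposition~\ref{ex2.6}.ii) to reduce the sum to the single term $F(t_1)\mu(B_1)$ with $\mu(B_1)=\mu(A)$, passes to the finite partition $\{B_1,\bigcup_{n\ge 2}B_n\}$, and then applies Proposition~\ref{ex2.6}.i) to any finer finite partition to conclude that its sum is again $F(t_1)\mu(B_1)$; the reverse implication is handled symmetrically. Both appeals to Proposition~\ref{ex2.6} use null-additivity to guarantee that the distinguished piece carries the \emph{full} measure of the atom above it (so that $\mu(C_1)=\mu(B_1)=\mu(A)$), and it is precisely this equality that lets the paper match the sum over a refinement with the inequality $h(F(t_1)\mu(B_1),I)<\varepsilon$ already established.

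Your organisation around the common condition $(\star)$ sidesteps this: because $(\star)$ quantifies over all sub-atoms $B\subseteq B_\varepsilon$ with their own values $\mu(B)$, you never need to compare $\mu(B)$ with $\mu(B_\varepsilon)$ or $\mu(A)$, and monotonicity together with $\sigma$-null-null-additivity alone suffice --- exactly the hypotheses announced in the statement. The price is a slightly more abstract presentation; the gain is a proof that stays strictly within the stated assumptions, whereas the paper's argument tacitly leans on null-additivity through Proposition~\ref{ex2.6}. The underlying mechanism --- that on an atom every tagged partition collapses to a single nonzero term --- is of course common to both approaches.
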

\begin{proof} Let $\varepsilon >0$ be arbitrary and denote 
$I=(B_s)\int_{A}Fd\mu .$ There exist a countable partition $P_{\varepsilon}:=\{B_{n}\}_{n\in \mathbb{N}}$ of $A$ and 
an integer $n_0\in\mathbb{N}$ 
so that for every $t_{n}\in B_{n}$ 
one has
\begin{eqnarray}\label{15}
h(\sum_{k=1}^n F(t_{k})\mu (B_{k}),I)<\varepsilon , \mbox{\, for every\,} n\geq n_0.
\end{eqnarray}
Since $A=\underset{n=1}{\overset{\infty }{\cup }}B_{n}$ is an atom of $\mu ,$
by Proposition \ref{ex2.6}.ii) it can be assumed
without  loss of generality that 
$\mu (B_{1})>0,B_{1}$ is an atom of $\mu $, $\mu (B_{1})=\mu (A)$ and
$\mu(B_{n})=0$, for every $n\geq 2.$ \\
Therefore, 
$\underset{k=1}{\overset{
n}{\sum }}F(t_{k})\mu (B_{k})=F(t_{1})\mu (B_{1})$, so (\ref{15}) becomes 
\begin{eqnarray}\label{16}
h(F(t_{1})\mu (B_{1}),I) <\varepsilon .
\end{eqnarray}
So let  $\widetilde{P}_{\varepsilon }=\{B_{1},\underset{n=2}
{\overset{
\infty }{\cup }}B_{n}\}$ be a finite partition of $A$.
\\
Consider an arbitrary partition $\widetilde{P}=\{C_{i}\}_{i=1}^m
$ of $A$, so that $\widetilde{P}\geq \widetilde{P}_{\varepsilon }$ and
arbitrary $t_{i}\in C_{i},i=1 \ldots m.$\\
 Then $m\geq 2$ and
 $B_{1}=\underset{i=1}{\overset{l}{\cup }}C_{i}$, $\underset{n=2}{
\overset{\infty }{\cup }}B_{n}=\underset{i=l+1}{\overset{m}{\cup }}C_{i}$,
for some $l\in \{1,\ldots,m-1\}.$
\\
Since $\mu $ is $\sigma$-\textit{-}null-null-additive, then $\mu (\cup_{n=2}^{\infty} B_{n})=0$, so, by the monotonicity of $\mu $
it results $\mu (C_{i})=0$, for every $i\in \{l+1, \ldots, m\}.$ Then, 
\begin{eqnarray}\label{17}
\sum_{i=1}^m F(t_{i})\mu (C_{i})=\sum_{i=1}^l F(t_{i})\mu (C_{i}).
\end{eqnarray}
On the other hand, because $B_{1}$ is an atom of $\mu $ and $\{C_{i}\}_{i=1, \ldots, l}$ is a partition of $B_{1}$, by 
Proposition \ref{ex2.6}.i), 
it is possible to assume, without loss of generality, 
that $\mu (C_{1})=\mu (B_{1})$ and $\mu (C_{i})=0$, for every $i\in\{2,\ldots, l\}.$\\ 
From $(\ref{17})$ it follows 
\begin{eqnarray*}
\underset{i=1}{\overset{m}{\sum }}F(t_{i})\mu (C_{i})=F(t_{1})\mu (B_{1}),
\end{eqnarray*}
which yields from $(\ref{16})$ that 
\begin{eqnarray*}
h(\underset{i=1}{\overset{m}{\sum }}F(t_{i})\mu (C_{i}),I)=h(F(t_{1})\mu
(B_{1}),I)<\varepsilon .
\end{eqnarray*}
So, $F$ is Gould $m$-integrable on $A$ and 
$$(G)\int_{A}Fd\mu
=(B_s)\int_{A}Fd\mu .$$

Viceversa suppose that $F$ is Gould integrable on $A$, namely 
 there exists a set $I_A \in bf(X)$ such that for every 
$\varepsilon >0$, there is a finite partition $P_{\varepsilon, A }$ of 
$A$ such that for every partition of $A$, $P:=\{A_{i}\}_{i=1}^{n}$, with $P\geq P_{\varepsilon,A }$ and
every $t_{i}\in A_{i},i\in \{1,2,\ldots,n\}$, it holds
$$h(\sum_{i=1}^n F(t_{i})\mu (A_{i}),I_A)<\varepsilon.$$
As before, since $A$ is an atom by Proposition \ref{ex2.6}.i) it can be assumed
without  loss of generality that $A_1$ is an atom,
 $\mu (A_{1})=\mu (A)$ and $\mu(A_{n})=0$, for every $n\geq 2.$
Therefore, 
$\sum_{k=1}^n F(t_{k})\mu (A_{k})=F(t_{1})\mu (A_{1})$, so the previous inequality becomes 
\begin{eqnarray*}\label{16bis}
h(F(t_{1})\mu (A_{1}),I_A) <\varepsilon .
\end{eqnarray*}
Consider now an arbitrary countable partition $P^c_{\varepsilon, A}$ of $A$ 
$P^c_{\varepsilon, A} \geq \{A_1, \cup_{j \geq 2} A_j\}$.
Then, for every $P \geq P^c_{\varepsilon, A}$, let $\mathbb{M}_p$ the set of indexes of elements of $P$  which cover $A_1$. Since $A_1$ is an atom, one can assume, without loss of generality, that $\mathbb{M}_p$ consists of just one element $m_0$.
So,
$$\sup_{k\geq m_0}h\left(\sum_{i=1}^kF(t_i)\mu(A_i),I\right)\leq \vp,$$
and this proves that $F$  is Birkhoff simple
integrable on $A$.
\end{proof}
The two previous theorems extend analogous results  in \cite{cg2014}  obtained by some of the Authors when $F$ is a scalar   Birkhoff integrable function.
\\

Now,  in order to prove that Birkhoff simple integrability extends 
Definition \ref{2.1cr2004},
the  case $cwk(X)$-valued  is considered and  a single valued result is stated:
\begin{theorem}\label{partfinite}
Let $(T,\mathcal{A},\mu)$ be a finite countably additive measure space, and $f:T\to X$ be any bounded function. Then the following are equivalent:
\begin{enumerate}[\bf  \rm \ref{partfinite}.1)]
\item $f$ is Gould-integrable;
\item $f$ is Birkhoff-integrable.
\end{enumerate}
\end{theorem}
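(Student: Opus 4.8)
The plan is to exploit the fact that, for a bounded single‑valued function on a finite countably additive measure space, ``Gould integrability'' (which by Remark \ref{jFG} coincides with RLF‑integrability) is a condition about convergence of Riemann‑type sums along the net of finite partitions, while Birkhoff integrability is the classical condition about unconditional convergence of the series $\sum_n f(t_n)\mu(A_n)$ along countable partitions. The two are linked through the set function $\mu$ itself being integrable: since $\mu$ is here a genuine finite countably additive measure, it is trivially integrable in the sense of Definition \ref{muintegrabile}, $\Psi(A)=\mu(A)$, and the Henstock‑type Lemma of Remark \ref{fatti-p}.2 is available. So the strategy is essentially: (i) use finiteness of $\mu(T)$ and boundedness of $f$ to control ``tails'' of partitions, so that one may pass freely between the finite and the countable partition pictures; (ii) show each notion forces the Cauchy criterion for the other.

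First I would prove \ref{partfinite}.1 $\Rightarrow$ \ref{partfinite}.2. Assume $f$ is Gould‑integrable with integral $I$ and fix $\vp>0$; take a finite partition $P_\vp=\{E_1,\dots,E_n\}$ witnessing Gould integrability at level $\vp/2$. Given any countable partition $P'=\{B_j\}$ finer than $P_\vp$, countable additivity of $\mu$ and the bound $M=\sup_t\|f(t)\|$ give an index $j_0$ with $\sum_{j>j_0}\|f(t_j)\|\mu(B_j)\le \vp/4$ for every choice of tags (exactly as in the proof of Theorem \ref{ex4.1}, display (\ref{restisomme})). Refining the finite set $\{B_1,\dots,B_{j_0}\}$ together with the leftover pieces into a finite partition finer than $P_\vp$ and applying Lemma \ref{prop-h} yields $\|\sum_{j=1}^{J}f(t_j)\mu(B_j)-I\|\le\vp$ for all $J\ge j_0$; hence the series $\sum_j f(t_j)\mu(B_j)$ converges to $I$ for every countable partition refining $P_\vp$ and every tagging, which is Birkhoff integrability with the same value $I$. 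This is the routine direction.

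The converse, \ref{partfinite}.2 $\Rightarrow$ \ref{partfinite}.1, is where the real work lies, and I expect it to be the main obstacle. Suppose $f$ is Birkhoff‑integrable with integral $I$. One wants a \emph{finite} partition $P_\vp$ such that every finite refinement has Riemann sum within $\vp$ of $I$. The natural approach is a contradiction/extraction argument: Birkhoff integrability gives a countable partition $P^0=\{E_j\}$ with the unconditional‑convergence property at level $\vp/4$; set $P_\vp:=\{E_1,\dots,E_N,A\}$ where $N$ is chosen (using finiteness of $\mu$ and boundedness of $f$) so that $A=\bigcup_{j>N}E_j$ has $\mu(A)\le \vp/(4M)$, so any sum over tagged pieces of $A$ is at most $\vp/4$ in norm. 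Then, given a finite refinement $P'=\{H_1,\dots,H_u\}$ of $P_\vp$, split the Riemann sum into the part supported on $T\setminus A$ (a refinement of $\{E_1,\dots,E_N\}$) and the part supported on $A$. For the first part one completes $P'\!\restriction_{T\setminus A}$ by the tail $\{E_j:j>N\}$ to get a countable partition finer than $P^0$; Birkhoff integrability controls the \emph{full} series, and the tail of that series is again $\le\vp/4$ by the measure estimate, so Lemma \ref{prop-h} transfers the control to the finite partial sum over $T\setminus A$. Adding the $\le\vp/4$ bound for the $A$‑part closes the estimate at $\le\vp$. The delicate point — and the thing to handle carefully — is that Birkhoff integrability as classically stated gives convergence of the \emph{whole} series $\sum_j f(t_j)\mu(E_j)$ but a priori only a bound on $\limsup$ of partial sums in the weaker Definition \ref{ex3.5}; here, since $f$ is single‑valued and the Birkhoff integral in the sense of \cite{cr2004} is the genuine unconditionally‑convergent‑series integral, one has honest convergence of partial sums, so the argument above (essentially a mirror image of the first half of the proof of Theorem \ref{ex4.1}) goes through and produces the required finite partition $P_\vp$. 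Finally, the two integrals coincide because in both halves the common value $I$ is preserved throughout the estimates.
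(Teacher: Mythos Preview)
Your argument is correct, and in fact it is essentially the single-valued specialization of the proof the paper gives in full for Theorem \ref{ex4.1}: the direction $(1)\Rightarrow(2)$ is precisely the ``converse implication'' half of that proof (finite $P_\vp$, tail cut at $j_0$, completion to a finite refinement via $E_i\cap Q$), and your $(2)\Rightarrow(1)$ mirrors the first half (truncate the witnessing countable partition to $\{E_1,\dots,E_N,A\}$, split the Riemann sum, control the $A$-part by the measure estimate and the $T\setminus A$-part by recompleting with the tail $\{E_j:j>N\}$). The only place where the classical Birkhoff hypothesis differs from the simple-Birkhoff one used in Theorem \ref{ex4.1} is that you have honest convergence of the full series rather than a $\limsup$ bound, and you use exactly this to pass from the limit of the series back to the $l$-th partial sum; that step is sound.

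The paper, however, does not argue this theorem at all: its proof is a bare citation to \cite[Theorems 15 and 16]{pot2007} (where Gould integrability appears as RLF-integrability, cf.\ Remark \ref{jFG}) together with \cite[Theorem 5.1]{cg2014}. So your route is genuinely different in that it is self-contained and reuses the machinery already developed in the paper for Theorem \ref{ex4.1}, whereas the paper defers the work to the literature. What your approach buys is independence from those external references and a visible parallel with the multivalued case; what the paper's approach buys is brevity, since the result is already established elsewhere for single-valued functions.
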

\begin{proof}
It is a consequence of \cite[Theorems 15 and 16]{pot2007}, when dealing with single-valued functions, see also \cite[Theorem 5.1]{cg2014}.
\end{proof}

\begin{remark} \rm \label{bgmulti}
Clearly, the same equivalence is valid also for $ck(X)$-or $cwk(X)$-valued  bounded multifuctions $F$ 
thanks to Proposition \ref{integraleck} and Lemma \ref{labu} since $j \circ F$ is Gould integrable if and only if $F$ is by Remark \ref{jFG}.
\end{remark}

An easy consequence of the Henstock Lemma \ref{fatti-p}.2) is the following result, that will be useful later.

\begin{proposition}\label{mupsiwb}
Let $F:T\to cwk(X)$ be any bounded multifunction, and assume
 that $\mu$ is finite, non-negative, monotone and integrable, with integral function $\Psi$. Then $F$ is simple-Birkhoff (resp. Gould) integrable with respect to $\mu$ if and only if it is simple-Birkhoff (resp. Gould) integrable  with respect to $\Psi$, with the same integral.
\end{proposition}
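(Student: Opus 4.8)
The statement is purely quantitative: it says that the non-negative additive set function $\Psi$ is ``Riemann-indistinguishable'' from $\mu$ against a bounded integrand, so the plan is to deduce it from the Henstock-type Lemma recorded in Remark \ref{fatti-p}.2). Fix once and for all a constant $M>0$ with $|F(t)|\le M$ for every $t\in T$ (possible since $F$ is bounded), and note that $\Psi$ is additive, non-negative, hence monotone, and finite — its value on $T$ is the finite limit appearing in Definition \ref{muintegrabile} — so every member of every partition automatically has finite $\Psi$-measure. The only algebraic input needed is that Proposition \ref{ex2.8}.i)--ii) combine to give, for any finite family of tagged measurable sets $\{(C_i,t_i)\}_{i=1}^{m}$,
\begin{equation*}
h\Big(\sum_{i=1}^{m} F(t_i)\mu(C_i),\ \sum_{i=1}^{m} F(t_i)\Psi(C_i)\Big)\ \le\ \sum_{i=1}^{m}|\mu(C_i)-\Psi(C_i)|\,|F(t_i)|\ \le\ M\sum_{i=1}^{m}|\mu(C_i)-\Psi(C_i)|,
\end{equation*}
and the same bound holds for the $n$-th partial sum of a countable family of tagged sets.

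The first real step is to upgrade Remark \ref{fatti-p}.2) to countable partitions. Given $\vp>0$, let $P^{H}_{\vp}$ be a partition as in that remark at level $\vp/(2M)$. If $\{C_n\}_{n\in\mathbb{N}}$ is any countable partition finer than $P^{H}_{\vp}$, then for each $N$ the finite partition formed by $C_1,\dots,C_N$ together with the sets $D\setminus(C_1\cup\cdots\cup C_N)$, $D$ ranging over $P^{H}_{\vp}$, is again finer than $P^{H}_{\vp}$; applying Remark \ref{fatti-p}.2) with $A=T$ to it gives $\sum_{i=1}^{N}|\Psi(C_i)-\mu(C_i)|\le \vp/(2M)$ for every $N$, hence $\sum_{n=1}^{\infty}|\Psi(C_n)-\mu(C_n)|\le \vp/(2M)$.

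For the Gould part, assume $F$ is Gould integrable with respect to $\mu$, with integral $I$; fix $\vp>0$, pick a finite partition $P^{G}_{\vp}$ witnessing this at level $\vp/2$ (its members having finite $\mu$-measure), and set $P_{\vp}:=P^{G}_{\vp}\wedge P^{H}_{\vp}$. For every finite partition $\{C_i\}$ finer than $P_{\vp}$ and every choice of tags, the displayed estimate together with Remark \ref{fatti-p}.2) give $h(\sum_i F(t_i)\Psi(C_i),\sum_i F(t_i)\mu(C_i))\le M\cdot\vp/(2M)=\vp/2$, and combining with $h(\sum_i F(t_i)\mu(C_i),I)\le\vp/2$ yields $h(\sum_i F(t_i)\Psi(C_i),I)\le\vp$; thus $F$ is Gould integrable with respect to $\Psi$, with the same integral $I$. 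The converse is obtained verbatim after exchanging $\mu$ and $\Psi$, since the conclusion of Remark \ref{fatti-p}.2) is symmetric in the two set functions and $P^{H}_{\vp}$ is unchanged.

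The simple-Birkhoff part runs along the same lines: now start from a countable partition $P^{B}_{\vp}$ witnessing (\ref{maxlim1}) at level $\vp/2$ and take $P_{\vp}:=P^{B}_{\vp}\wedge P^{H}_{\vp}$, which is still countable. For any countable partition $\{C_n\}$ finer than $P_{\vp}$ and any tags, the countable-partition bound from the second paragraph gives $\sum_n|\Psi(C_n)-\mu(C_n)|\le\vp/(2M)$, so for every $n$
\begin{equation*}
h\Big(\sum_{i=1}^{n}F(t_i)\Psi(C_i),I\Big)\ \le\ \tfrac{\vp}{2}+h\Big(\sum_{i=1}^{n}F(t_i)\mu(C_i),I\Big),
\end{equation*}
and passing to $\limsup_n$ produces $\limsup_n h(\sum_{i=1}^{n}F(t_i)\Psi(C_i),I)\le\vp$; hence $F$ is simple-Birkhoff integrable with respect to $\Psi$ with integral $I$, and once more the converse is symmetric. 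I expect the only genuinely delicate point to be the passage from the finite-partition form of the Henstock Lemma to its countable-partition form; everything after that is just the triangle inequality for $h$ plus Proposition \ref{ex2.8}.
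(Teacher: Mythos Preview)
Your proof is correct and follows essentially the same route as the paper's: both use the Henstock-type Lemma of Remark \ref{fatti-p}.2) together with the bound $h(\sum F(t_i)\mu(C_i),\sum F(t_i)\Psi(C_i))\le M\sum|\mu(C_i)-\Psi(C_i)|$, complete the disjoint sets $C_1,\dots,C_n$ of a countable refinement to a finite partition finer than the Henstock partition in order to apply the lemma, and then use the triangle inequality for $h$. The only cosmetic differences are that the paper writes out only the simple-Birkhoff case and is content with the bound $(1+M)\vp$, whereas you scale $\vp$ at the outset to land exactly on $\vp$ and also spell out the Gould case.
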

\begin{proof}
The proof will be given only for the simple Birkhoff integrability: the other case is quite similar.

Let $M$ denote any positive upper bound for $\sup_{t\in T}|F(t)|$. Fix $\vp$ and let $P_{\vp}$ be any $\vp$-approximated partition of $T$ for the Henstock Lemma (see \ref{fatti-p}.2). Now, assume that $F$ is simple-Birkhoff integrable with respect to $\mu$ (with integral $J$), and fix any countable partition $P'_{\vp}$, finer than $P_{\vp}$ and such that, for every finer partition $P'':=\{B_1,B_2,...\}$ and every choice of the tags $t_i\in B_i$, it holds
$$\limsup_nh(\sum_{i=1}^nF(t_i)\mu(B_i),J)\leq \vp.$$ 
Now, since $P''$ is finer than $P_{\vp}$, thanks to the Henstock Lemma \ref{fatti-p}.2), for every $n$ it holds
$$\sum_{i=1}^nh(F(t_i)\mu(B_i),F(t_i)\Psi(B_i))\leq M \vp$$
(indeed, the pairwise disjoint sets $B_1,...,B_n$ can be seen as part of a finite partition finer than $P_{\vp}$).
So, 
$$\limsup_nh(\sum_{i=1}^n F(t_i)\Psi(B_i),J)\leq (1+M)\vp.$$
This suffices to prove that $F$ is simple Birkhoff integrable with respect to $\Psi$.
\\
A perfectly similar argument shows the converse implication.
\end{proof}

Now the main result of this subsection can be achieved as a 
 consequence of Theorems \ref{partfinite} and \ref{ex4.1}. Namely
at least for $cwk(X)$- or $ck(X)$-valued bounded functions and when $\mu$ is finite and countably additive, there is equivalence among Birkhoff, simple Birkhoff and Gould integrability.\\

\begin{theorem}\label{general}
Let $\mu:T\to \erre^+_0$ be any finite monotone, integrable set function, continuous from below. Then, for every bounded function $F:T\to cwk(X)$, simple Birkhoff integrability with respect to $\mu$ is equivalent to Birkhoff and Gould integrability.
\end{theorem}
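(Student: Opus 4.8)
The plan is to transfer everything to the countably additive setting by means of the integral function $\Psi$. First I would apply Proposition~\ref{psisigmaadd}: since $\mu$ is increasing, satisfies $\mu(\emptyset)=0$, is continuous from below and integrable, the set function $\Psi(A)=\int_A d\mu$ is countably additive on the $\sigma$-algebra $\mathcal{A}$; being additive and non-negative it is moreover monotone, and $\Psi(T)=\lim_{>}s(\mu,P)$ is finite because $\mu$ is finite, so $\Psi:\mathcal{A}\to[0,\infty)$ is a genuine finite countably additive measure.

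Next I would invoke Proposition~\ref{mupsiwb}, whose hypotheses are precisely those at hand ($F$ bounded and $cwk(X)$-valued; $\mu$ finite, non-negative, monotone, integrable): $F$ is simple-Birkhoff integrable with respect to $\mu$ if and only if it is simple-Birkhoff integrable with respect to $\Psi$, with the same integral, and the analogous statement holds for Gould integrability. This removes the only awkward feature of $\mu$, namely the possible lack of additivity, at no cost.

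It then remains to compare the three notions against the finite measure $\Psi$. By Theorem~\ref{ex4.1} --- applicable since $\Psi$ is a finite countably additive measure on a $\sigma$-algebra and $F$ is a bounded $b(X)$-valued multifunction --- $F$ is simple-Birkhoff integrable with respect to $\Psi$ if and only if it is Gould integrable with respect to $\Psi$, with the same integral. By Remark~\ref{bgmulti} (equivalently, Theorem~\ref{partfinite} applied to the single-valued map $j\circ F$, using Proposition~\ref{integraleck}, Lemma~\ref{labu} and Remark~\ref{jFG}), $F$ is Gould integrable with respect to $\Psi$ if and only if it is Birkhoff integrable with respect to $\Psi$ in the sense of Definition~\ref{2.1cr2004}. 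Chaining these equivalences --- simple Birkhoff w.r.t.\ $\mu$ $\Leftrightarrow$ simple Birkhoff w.r.t.\ $\Psi$ $\Leftrightarrow$ Gould w.r.t.\ $\Psi$ $\Leftrightarrow$ Gould w.r.t.\ $\mu$, together with Gould w.r.t.\ $\Psi$ $\Leftrightarrow$ Birkhoff w.r.t.\ $\Psi$ --- gives the asserted equivalence, with all the integrals coinciding.

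The difficulty here is not an estimate but bookkeeping: since Definition~\ref{2.1cr2004} is only formulated for countably additive measures, \emph{Birkhoff integrability} in the statement has to be read relative to $\Psi$, and one must verify at each step that the hypotheses of the quoted results are met --- finiteness and countable additivity of $\Psi$, the $\sigma$-algebra structure, boundedness of $F$, and $F$ taking values in $cwk(X)$, so that the R{\aa}dstr\"{o}m embedding of Lemma~\ref{labu} applies and the Gould integral remains in $cwk(X)$ by Proposition~\ref{integraleck}. No new argument is needed beyond Propositions~\ref{psisigmaadd} and~\ref{mupsiwb} together with Theorems~\ref{ex4.1} and~\ref{partfinite}.
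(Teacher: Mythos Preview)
Your proposal is correct and follows essentially the same route as the paper: pass from $\mu$ to its integral function $\Psi$ via Proposition~\ref{mupsiwb}, note that $\Psi$ is a finite countably additive measure by Proposition~\ref{psisigmaadd}, and then invoke Theorem~\ref{ex4.1} (for simple Birkhoff $\Leftrightarrow$ Gould) and Theorem~\ref{partfinite}/Remark~\ref{bgmulti} (for Gould $\Leftrightarrow$ Birkhoff) in the countably additive setting. Your write-up is in fact more explicit than the paper's own proof about the role of Theorem~\ref{partfinite} in handling the Birkhoff part and about the need to read ``Birkhoff integrability'' relative to $\Psi$, but the underlying argument is identical.
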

\begin{proof}
Let $\Psi$ denote the integral function of $\mu$. Since $F$ is bounded, Gould or (simple)-Birkhoff integrability of $F$ with respect to $\mu$ is equivalent to the same type of integrability with respect to $\Psi$ thanks to Propositions \ref{mupsiwb}. But $\Psi$ is countably additive by Proposition \ref{psisigmaadd}, so Theorem \ref{ex4.1} ensures that $F$ is (simple)-Birkhoff integrable with respect to $\Psi$ (i.e. $\mu$) if and only if it is Gould integrable with respect to $\Psi$ (i.e. $\mu$).
\end{proof}
\subsection{The case of $\sigma$-finite measure $\mu$}\label{three.2}
 In this section, a comparison will be obtained between simple-Birkhoff and  
Birkhoff integrability, in a more general setting, i.e. assuming that the measure $\mu$ is $\sigma$-finite, and also without requiring
 boundedness of $F$.  
Let's begin with a single valued result:
\begin{theorem}\label{versopettis}
Let $(T,\mathcal{A},\mu)$ be any $\sigma$-finite measure space,
 with $\mu$ countably additive and non-negative. If $f:T\to X$ is simple-Birkhoff integrable then $f$ is Pettis integrable.
\end{theorem}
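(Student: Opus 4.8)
The plan is to establish directly the three ingredients of Pettis integrability: scalar measurability of $f$, scalar integrability, and the fact that every indefinite vector integral of $f$ lies in $X$. As a preliminary normalisation, since $\mu$ is $\sigma$-finite write $T=\bigcup_k T_k$ with $T_k\uparrow T$ and $\mu(T_k)<\infty$; replacing any witnessing partition $P_\varepsilon$ by its common refinement with $\{T_1,T_2\setminus T_1,\dots\}$ (still finer than $P_\varepsilon$), we may assume throughout that every partition that occurs has pieces of finite measure, so that all the Birkhoff sums $\sum_i f(t_i)\mu(A_i)$ are meaningful. We shall also use repeatedly that ``$P\geq P_\varepsilon$'' is independent of the enumeration of $P$, so the $\limsup$ clause of Definition \ref{ex3.5} is available for \emph{every} enumeration of any partition finer than $P_\varepsilon$; by relabelling, any prescribed finite block of indices may be pushed arbitrarily far into the tail.

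The core of the argument is an oscillation estimate. Given $\varepsilon>0$, fix a countable partition $\{A_n\}_n$, finer than $P_\varepsilon$, with finite-measure pieces, realising $\limsup_N\|\sum_{i\le N}f(t_i)\mu(A_i)-I\|\le\varepsilon$ for all enumerations and all choices of tags; I claim that then, for every $x^{*}\in X^{*}$,
\[
\sum_n \mathrm{osc}(x^{*}f,A_n)\,\mu(A_n)\ \le\ 2\|x^{*}\|\,\varepsilon .
\]
To see this, fix a finite index set $F$ and two tag selections that coincide off $F$ and on $F$ nearly realise, respectively, $\sup_{A_n}x^{*}f$ and $\inf_{A_n}x^{*}f$. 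Re-enumerating so that $F$ lies far in the tail, for $N$ past $F$ the two partial sums differ by the \emph{fixed} vector $\sum_{n\in F}(f(u_n)-f(v_n))\mu(A_n)$, of norm $\le 2\varepsilon$ by two applications of the $\limsup$ bound; applying $x^{*}$ and letting the approximation errors vanish gives $\sum_{n\in F}\mathrm{osc}(x^{*}f,A_n)\mu(A_n)\le 2\|x^{*}\|\varepsilon$, and then $F\uparrow\mathbb N$ gives the claim (the case $\#F=1$ shows en route that each $\mathrm{osc}(x^{*}f,A_n)$ is finite). I expect this step, together with the reordering argument below, to be the main technical obstacle: everything rests on converting the purely asymptotic $\limsup$-condition into honest finite estimates, which is precisely what the freedom of re-enumeration buys.

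Now fix $x^{*}$ and let $w_n$ be baseline tags. Applying $x^{*}$ to the $\limsup$ bound, the real partial sums $\sum_{i\le N}x^{*}(f(w_i))\mu(A_i)$ lie within $\|x^{*}\|\varepsilon$ of $x^{*}(I)$ for $N$ large, and this for every re-enumeration of $\{A_n\}$; hence $\sum_n x^{*}(f(w_n))\mu(A_n)$ converges absolutely, since a conditionally convergent real series has a rearrangement with unbounded partial sums. Thus the simple functions $g_\varepsilon:=\sum_n x^{*}(f(w_n))\chi_{A_n}$ belong to $L^1(\mu)$, and by the oscillation estimate $\int^{*}|g_\varepsilon-x^{*}f|\,d\mu\le 2\|x^{*}\|\varepsilon$ (the outer integral, since $x^{*}f$ is not yet known measurable); taking $\varepsilon=1/j$, the $g_{1/j}$ converge in $L^1$ to some measurable $h$ with $\int^{*}|x^{*}f-h|\,d\mu=0$, so $x^{*}f=h$ $\mu$-a.e. — whence $f$ is scalarly measurable. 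Moreover $\int_T|x^{*}f|\,d\mu\le\sum_n|x^{*}(f(w_n))|\,\mu(A_n)+2\|x^{*}\|\varepsilon<\infty$, so $x^{*}f\in L^1(\mu)$, and $\big|\int_T x^{*}f\,d\mu-x^{*}(I)\big|\le 3\|x^{*}\|\varepsilon$; letting $\varepsilon\to0$, $\int_T x^{*}f\,d\mu=x^{*}(I)$. Hence $f$ is scalarly integrable and its vector integral over $T$ is $I\in X$.

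Finally, the same scheme applies to the restriction $f|_A$ for each $A\in\mathcal A$, which is itself simple-Birkhoff integrable on $A$ — with integral $I_A=(B_s)\int_A f\,d\mu$ — as one verifies by intersecting the witnessing partitions of $T$ with $A$ and invoking countable additivity of $\mu$, in the spirit of Theorem \ref{ex3.7}. This yields $\int_A x^{*}f\,d\mu=x^{*}(I_A)$ for every $x^{*}\in X^{*}$ and every $A\in\mathcal A$. Since each $I_A$ lies in $X$, $f$ is Pettis integrable, with indefinite Pettis integral $A\mapsto I_A$. The only genuinely delicate points are the oscillation estimate and the absolute-convergence argument (both exploiting re-enumeration of the witnessing partitions); once those are in place the measurability and the passage to measurable subsets are routine.
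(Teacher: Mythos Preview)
Your argument is correct and shares with the paper the key re-enumeration\,/\,Riemann-rearrangement step showing that the scalar series $\sum_n x^{*}(f(t_n))\mu(A_n)$ converge absolutely for every $x^{*}$. From that point on the two proofs diverge. The paper takes a much shorter path: once the scalar series are absolutely convergent, the $\limsup$-bound upgrades immediately to ordinary Birkhoff integrability of $\langle x^{*},f\rangle$, and one then invokes the classical fact that a real-valued Birkhoff integrable function is Lebesgue integrable; this absorbs both measurability and the $L^{1}$ bound in one stroke, and the paper stops at $\int_T\langle x^{*},f\rangle\,d\mu=\langle x^{*},I\rangle$. Your route is longer but more self-contained: the oscillation estimate $\sum_n\mathrm{osc}(x^{*}f,A_n)\,\mu(A_n)\le 2\|x^{*}\|\varepsilon$ lets you build the $L^{1}$-approximation by countable-valued simple functions directly, without appealing to the Birkhoff\,/\,Lebesgue equivalence for scalar functions. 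You also go further than the paper in sketching why the \emph{indefinite} integral $A\mapsto I_A$ takes values in $X$ (the paper's proof stops at $A=T$); your remark that $f|_A$ inherits simple-Birkhoff integrability by intersecting the witnessing partitions with $A$ is the right idea, though producing the value $I_A$ still requires a Cauchy-type argument and is not quite the content of Theorem~\ref{ex3.7}, which runs in the opposite direction (from the pieces to the union).
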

\begin{proof}
Assume that $f$ is simple-Birkhoff integrable, with integral $x$. Then, for each $\vp>0$ there exists a partition $P_{\vp}:=\{E_1,...,E_n,...\}$ of $T$ such that, for every finer partition $P:=\{B_1,...,B_k,...\}$ it holds:
\begin{eqnarray}\label{sommelimitate}
\limsup_k\|\sum_{i=1}^kf(t_i)\mu(B_i)-x\|\leq \vp,
\end{eqnarray}
for every choice of the points $t_k\in B_k$, $k\in \mathbb{N}$.
Then clearly, for each element $x^*\in X^*$, the mapping 
$\langle  x^*, f\rangle$ is simple-Birkhoff integrable, with integral $\langle x^*,x\rangle$.
\\
Now, the Birkhoff integrability of   $\langle x^*,f\rangle$ will be proved for every $x^*$: to this aim, it will be sufficient to prove that, for each fixed $x^*$, all the series $$\sum_k\langle x^*,f(t_k)\rangle\mu(B_k)$$
 are absolutely convergent, as soon as  $P:=\{B_1,...,B_k,...\}$ is any finer partition of $P_{\vp}$ and for every choice $t_k\in B_k$, $k\in \mathbb{N}$. By contradiction, assume that one of these real-valued series is not absolutely convergent: then it is possible to rearrange its terms in such a way that the series diverge, and this would contradict the condition (\ref{sommelimitate}). 
So, the real-valued mapping $\langle x^*,f\rangle$ is Birkhoff-integrable, and therefore in $L^1$, for every element $x^*$ of the dual space $X^*$. Clearly, it is also
$$\int_T \langle x^*,f\rangle d\mu= \langle x^*,x\rangle$$
for all $x^*\in X^*$,
which proves Pettis integrability of $f$.
\end{proof}
Of course, this Theorem has an application to the multivalued case:
\begin{corollary}\label{jFP}
If $F:T\to cwk(X)$ is simple-Birkhoff integrable,
then $F$ and $j \circ F$ are Pettis integrable.
\end{corollary}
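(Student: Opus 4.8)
The plan is to deduce the statement for $F$ from the single-valued Theorem \ref{versopettis} applied to $j\circ F$, and then transfer Pettis integrability back to the multifunction $F$ itself via the R{\aa}dstr\"om embedding. First I would invoke Corollary \ref{jFsB}: since $F:T\to cwk(X)$ is simple-Birkhoff integrable in $cwk(X)$, the composition $j\circ F:T\to C(\Omega)$ is simple-Birkhoff integrable as a single-valued $C(\Omega)$-valued function. Here $C(\Omega)$ is a Banach space (with the sup-norm), and $(T,\mathcal{A},\mu)$ is $\sigma$-finite with $\mu$ countably additive and non-negative, so Theorem \ref{versopettis} applies verbatim with $X$ replaced by $C(\Omega)$. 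This gives that $j\circ F$ is Pettis integrable.

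Next I would pass from the Pettis integrability of $j\circ F$ to that of $F$. Recall that $j(cwk(X))$ is norm closed in $C(\Omega)$ by Lemma \ref{labu}(\ref{labu}.c), and $j$ is an isometric, positively-homogeneous, additive embedding by (\ref{labu}.a)--(\ref{labu}.b). The Pettis integral $P\text{-}\!\int_A (j\circ F)\,d\mu$ of $j\circ F$ over any $A\in\mathcal{A}$ is characterized by $\langle \varphi, P\text{-}\!\int_A(j\circ F)\,d\mu\rangle=\int_A\langle\varphi,j\circ F\rangle\,d\mu$ for all $\varphi\in C(\Omega)^*$; in particular it can be approximated (in the relevant topology) by averages of values of $j\circ F$, which lie in the convex closed set $j(cwk(X))$. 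A standard argument — the Pettis integral of a function valued in a closed convex set $j(cwk(X))$ stays in $j(cwk(X))$, which follows by a Hahn--Banach separation argument applied to $j(cwk(X))\cap \overline{B}$ on each piece of finite measure — shows $P\text{-}\!\int_A(j\circ F)\,d\mu\in j(cwk(X))$ for every $A$. Hence there is a (unique) set-valued map $A\mapsto \Phi(A)\in cwk(X)$ with $j(\Phi(A))=P\text{-}\!\int_A(j\circ F)\,d\mu$, and the characterizing identity becomes the support-function identity $s(\varphi,\Phi(A))=\int_A s(\varphi,F(\cdot))\,d\mu$ for the (restriction to $S_{X^*}$ of) functionals, which is exactly the definition of Pettis integrability of the multifunction $F$ with $\Phi(A)=(P)\!\int_A F\,d\mu$.

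The main obstacle is precisely the step asserting that the Pettis integral of $j\circ F$ remains inside the closed convex cone-like set $j(cwk(X))$: this is not automatic for Pettis integrals of functions valued in an arbitrary closed set, but it does hold here because $j(cwk(X))$ is \emph{convex} (it is closed under the operations in (\ref{labu}.a)) and \emph{norm closed} (\ref{labu}.c), and the Pettis integral over a set of finite measure is a weak limit of convex combinations of values — so a separation theorem applied in $C(\Omega)$ keeps it in $j(cwk(X))$; the $\sigma$-finite case is reduced to the finite case by the countable additivity of $A\mapsto P\text{-}\!\int_A(j\circ F)\,d\mu$ (a property of every Pettis integral) together with the norm-closedness of $j(cwk(X))$. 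Once this is in place, the equivalence ``$F$ Pettis integrable $\iff$ $j\circ F$ Pettis integrable'' is the multivalued counterpart, already used implicitly in Remark \ref{jFG} and Corollary \ref{jFsB}, so I would simply cite those together with Lemma \ref{labu}, and conclude that both $F$ and $j\circ F$ are Pettis integrable.
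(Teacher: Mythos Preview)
Your proposal is correct and follows essentially the same route as the paper: Corollary \ref{jFsB} gives simple-Birkhoff integrability of $j\circ F$, Theorem \ref{versopettis} yields its Pettis integrability, and then one transfers back to $F$ via the R{\aa}dstr\"om embedding. The only difference is that where you supply a direct Hahn--Banach separation argument to keep the Pettis integrals inside the closed convex cone $j(cwk(X))$, the paper simply invokes the existence of the integral in $cwk(X)$ and cites the proof of \cite[Proposition~3.5]{cr2004} for this step.
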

\begin{proof}
By Theorem \ref{versopettis} and thanks to the R{\aa}dstr\"{o}m embedding Theorem (Lemma  \ref{labu}), if $F$ is simple Birkhoff integrable
 so is $j(F)$ in view of Corollary \ref{jFsB}, where $j$ denotes the R{\aa}dstr\"{o}m embedding of $cwk(X)$, and therefore $j(F)$ is Pettis integrable. From this, and using the existence of the integral in $cwk(X)$, it follows also the Pettis integrability of $F$, proceeding like in the proof of \cite[Proposition 3.5]{cr2004} without separability assumption.\\
\end{proof}

Now, assuming that $(T,\mathcal{A},\mu)$ is a $\sigma$-finite quasi-Radon space, we shall prove that any bounded simple-Birkhoff integrable function is also Birkhoff integrable.
\begin{theorem}\label{mainequiv}
Let $(T,\mathcal{A},\mu)$ be a $\sigma$-finite 
measure space, 
If $F:T\to cwk(X)$ is a bounded simple-Birkhoff integrable mapping, then $F$ is also Birkhoff integrable.
\end{theorem}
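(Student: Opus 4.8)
The plan is to reduce the multivalued statement to a single-valued one via the R{\aa}dstr\"{o}m embedding, and then combine the results already available: Corollary \ref{jFP} (which gives Pettis integrability of $j\circ F$), Theorem \ref{versopettis}, and the finite-measure equivalence of Theorem \ref{general} (or Theorem \ref{partfinite}). Concretely, by Corollary \ref{jFsB} the mapping $F$ is simple-Birkhoff integrable in $cwk(X)$ if and only if $g:=j\circ F:T\to C(\Omega)$ is simple-Birkhoff integrable as a single-valued function, and again by Lemma \ref{labu} it suffices to prove that $g$ is Birkhoff integrable (in the classical sense), since $j(cwk(X))$ is norm-closed and the integral will then live in $cwk(X)$. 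Note $g$ is bounded because $F$ is and $j$ is an isometry. So the whole problem is: \emph{a bounded, single-valued, simple-Birkhoff integrable function on a $\sigma$-finite measure space is Birkhoff integrable}.

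First I would fix a countable measurable partition $\{T_m\}_{m\in\mathbb N}$ of $T$ with $\mu(T_m)<\infty$ for each $m$, which exists by $\sigma$-finiteness. On each $T_m$ the restriction of $\mu$ is finite and countably additive, and the restriction of $g$ is still bounded and (by additivity-type arguments as in Theorem \ref{ex3.7} and the remark on restrictions in Definition \ref{ex3.5}) simple-Birkhoff integrable on $T_m$; hence by Theorem \ref{partfinite} applied on the finite measure space $(T_m,\mathcal A_{T_m},\mu_{T_m})$, $g$ is Birkhoff integrable on each $T_m$. Denote $y_m:=\text{(Birkhoff)}\int_{T_m}g\,d\mu$. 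The task is then to assemble these pieces into a global Birkhoff integral on $T$, and the natural candidate for the value is $x$, the simple-Birkhoff integral of $g$ on $T$, which by the proof of Theorem \ref{versopettis} coincides with the Pettis integral and satisfies $\langle x^*,x\rangle=\sum_m\langle x^*,y_m\rangle$ for all $x^*\in X^*$.

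The heart of the argument — and the step I expect to be the main obstacle — is to verify the \emph{unconditional summability} condition that distinguishes Birkhoff from simple-Birkhoff integrability: given $\vp>0$, one must produce a \emph{single} countable partition $\Pi_\vp$ of $T$ such that for every countable partition $P=\{B_k\}$ finer than $\Pi_\vp$ and every choice of tags $t_k\in B_k$, the family $\{g(t_k)\mu(B_k)\}_k$ is summable with sum $\vp$-close to $x$, \emph{with no reference to the order of summation and no limsup}. The plan is: using boundedness of $g$ (say $\|g\|\le M$), pick $m_0$ with $\mu(\bigcup_{m>m_0}T_m)\le\vp/(4M)$, so the tail contributes less than $\vp/4$ regardless of tagging or ordering; on the finitely many sets $T_1,\dots,T_{m_0}$ use the Birkhoff integrability already established there to choose, on each, a partition witnessing unconditional $\vp/(4m_0)$-summability to $y_m$; then let $\Pi_\vp$ be the common refinement of $\{T_m\}$ with these finitely many partitions. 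For a finer partition $P=\{B_k\}$, split the index set according to which $T_m$ each $B_k$ lies in: the contributions from $m\le m_0$ form finitely many groups each unconditionally summable, and the contribution from $m>m_0$ is norm-bounded by $\vp/4$ uniformly — so the full family $\{g(t_k)\mu(B_k)\}_k$ is unconditionally summable with sum within $\vp/4+m_0\cdot\vp/(4m_0)=\vp/2<\vp$ of $\sum_{m\le m_0}y_m$, hence within $\vp$ of $x$ (adjusting $m_0$ to also control $\|x-\sum_{m\le m_0}y_m\|$, which follows from the weak identity above plus a standard argument, or directly from the simple-Birkhoff convergence). The delicate point is making the tail estimate genuinely \emph{unconditional} rather than merely a limsup: this works precisely because boundedness of $g$ turns the tail into an absolutely norm-summable family, $\sum_{k:B_k\subset\bigcup_{m>m_0}T_m}\|g(t_k)\|\mu(B_k)\le M\sum_{m>m_0}\mu(T_m)\le\vp/4$, so no rearrangement can escape the bound. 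Once unconditional summability near $x$ is established for every $\vp$, $g$ is Birkhoff integrable by definition with integral $x=j(I)$ for some $I\in cwk(X)$, and unwinding the embedding gives the Birkhoff integrability of $F$.
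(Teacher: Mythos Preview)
Your reduction via the R{\aa}dstr\"{o}m embedding and the use of Theorem \ref{partfinite} on each piece $T_m$ of finite measure is exactly the opening move the paper makes, and your identification of the global Pettis integrability of $g=j\circ F$ via Corollary \ref{jFP} is also right. The gap is in your gluing step. You write: ``pick $m_0$ with $\mu(\bigcup_{m>m_0}T_m)\le\vp/(4M)$'', and the whole tail estimate (the part you yourself flag as the heart of the argument) rests on this. But the hypothesis is only that $\mu$ is $\sigma$-finite, not finite; when $\mu(T)=\infty$ no such $m_0$ exists, and your unconditional tail bound $\sum_{B_k\subset\bigcup_{m>m_0}T_m}\|g(t_k)\|\mu(B_k)\le M\sum_{m>m_0}\mu(T_m)$ collapses. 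Boundedness of $g$ alone cannot force the tail measure small (take $g=1_{[0,1]}$ on $\mathbb{R}$ with Lebesgue measure). So the direct assembly you propose only works in the finite-measure case, which is already covered by Theorem \ref{ex4.1} together with Theorem \ref{partfinite}.

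The paper sidesteps this by not attempting a hands-on gluing at all: once $j\circ F$ is known to be Birkhoff integrable on each piece of a countable partition \emph{and} Pettis integrable on all of $T$, it invokes \cite[Lemma 3.2]{cr2005} (the implication $(ii)\Rightarrow(i)$ there), which is precisely the statement that these two ingredients together yield global Birkhoff integrability. The paper notes that although that Lemma is stated for finite measure spaces, the relevant implication does not use finiteness. To repair your argument you would either need to reproduce that lemma's mechanism---which exploits Pettis integrability in an essential way rather than crude measure bounds---or else first argue separately that the ``effective support'' of $g$ has finite measure, which is not automatic and is not what you wrote.
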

\begin{proof}\
Assume that 
$T$ is a $\sigma$-finite measure space. Then $T$ is the union of a sequence $(A_n)_n$ of sets of finite measure, on which $F$ is bounded.
 Now, if $F$ is simple-Birkhoff integrable in each $A_n$, it is also Birkhoff integrable there, thanks to Theorems \ref{ex4.1} and \ref{partfinite}, namely $j \circ F$ is Birkhoff integrable on each $A_n$.
Moreover,  thanks to Corollary \ref{jFP},
one can deduce that $j \circ F$ is Pettis integrable in $T$. So, the condition $(ii)$ of \cite[Lemma 3.2]{cr2005} is satisfied, and therefore one can conclude that $j\circ F$ is Birkhoff integrable by virtue of that Lemma:
observe only that, though that Lemma is stated in the setting of a finite measure space, the proof of the crucial implication $(ii)\Rightarrow (i)$ does not make use of this condition, so the conclusion holds also in this case.
\end{proof}

In order to establish a more general result of equivalence, a Lemma is needed, similar to \cite[Lemma 5]{pot2007}.
\begin{lemma}\label{flimitata}
Let $(T,\mathcal{A},\mu)$ be a $\sigma$-finite measure space, with $\mu$ non-negative and countably additive, and let $f:T\to X$ be any mapping.
Assume that there exists a countable partition $P:=\{E_1,E_2,...,E_n,...\}$ and a positive real number $M$ such that
\begin{eqnarray}\label{maxlim}
\limsup_n\|\sum_{i=1}^n(f(t_i)-f(t'_i))\mu(E_i)\|\leq M,\end{eqnarray}
for every choice of $t_i$ and $t'_i$ in $E_i,\ i\in \mathbb{N}$. Then the mapping $f$ is bounded in every set $E_n$ such that $\mu(E_n)\neq 0$. 
\end{lemma}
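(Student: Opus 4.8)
The plan is to argue by contradiction, exploiting the freedom in choosing the tags $t_i,t_i'$ in each $E_i$. Suppose that for some index $n$ with $\mu(E_n)\neq 0$ the mapping $f$ is unbounded on $E_n$. Then for each integer $m$ one can pick points $s_m, s_m'\in E_n$ with $\|f(s_m)-f(s_m')\|\geq m/\mu(E_n)$ — indeed, if $f$ is unbounded on $E_n$ choose $s_m$ with $\|f(s_m)\|$ large and $s_m'$ any fixed point of $E_n$, so that $\|f(s_m)-f(s_m')\|\to\infty$. The idea is then to feed one such bad pair into the single slot $E_n$ of the partition, while choosing $t_i=t_i'$ for every $i\neq n$, so that in the sum $\sum_{i=1}^k(f(t_i)-f(t_i'))\mu(E_i)$ all terms except the $n$-th cancel.

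First I would fix, for every index $i\neq n$, an arbitrary point $\tau_i\in E_i$ and set $t_i=t_i'=\tau_i$; then for each $i\neq n$ the term $(f(t_i)-f(t_i'))\mu(E_i)$ is zero. For the index $n$, given $m$, set $t_n=s_m$ and $t_n'=s_m'$ as above. With these choices, for every $k\geq n$ the partial sum reduces to the single term $(f(s_m)-f(s_m'))\mu(E_n)$, whose norm is at least $m$ by construction. Hence $\limsup_k\|\sum_{i=1}^k(f(t_i)-f(t_i'))\mu(E_i)\|\geq m$. Since this holds for every $m$, it contradicts the bound \eqref{maxlim}, which asserts that this $\limsup$ is at most $M$ for \emph{every} admissible choice of tags. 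Therefore $f$ must be bounded on each $E_n$ of positive measure.

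The only point requiring a little care is the legitimacy of using $t_i=t_i'$: the hypothesis quantifies over \emph{all} choices of $t_i$ and $t_i'$ in $E_i$, with no requirement that they be distinct, so the diagonal choice is permitted. I expect this to be the entire content of the argument; there is no real obstacle, since the partition $P$ is fixed in advance and the $\limsup$ over $k$ stabilizes at the constant value $(f(s_m)-f(s_m'))\mu(E_n)$ once $k\geq n$. One may optionally record that the same reasoning shows $f$ is in fact bounded on every \emph{finite} union of the $E_i$'s of positive measure, but for the statement as given only the single-block case is needed.
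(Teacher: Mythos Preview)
Your proposal is correct and follows essentially the same idea as the paper: set $t_i=t_i'$ for all $i\neq n$ so those terms cancel, leaving only $(f(t_n)-f(t_n'))\mu(E_n)$, whose norm is then controlled by $M$. The only cosmetic difference is that the paper argues directly---fixing arbitrary $t_n,t_n'\in E_n$ and concluding $\|f(t_n)-f(t_n')\|\leq M/\mu(E_n)$---whereas you phrase it as a contradiction; the content is the same.
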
 
\begin{proof}
Fix  $n$ such that $\mu(E_n)\neq 0$ and fix any two points $t_n$ and $t'_n$ in $E_n$. Choose also arbitrarily a point $\tau_j$  in every set $E_j$, $j\neq n$. Then, for $N$ sufficiently large, $N>n$, it is
$$\|\sum_{1\leq j\leq N, \ j\neq n}(f(\tau_j)-f(\tau_j))\mu(E_i)+(f(t_n)-f(t'_n))\mu(E_n)\|\leq M$$
thanks to (\ref{maxlim}), from which it follows clearly
$$\|f(t_n)-f(t'_n)\|\mu(E_n)\leq M,$$
i.e. 
$$\|f(t_n)-f(t'_n)\|\leq \frac{M}{\mu(E_n)},$$
and this concludes the proof, by arbitrariness of $t_n$ and $t'_n$.
\end{proof}
Now, the final result of equivalence can be stated.
\begin{theorem}\label{sBvsB}
Let $(T,\mathcal{A},\mu)$ be a $\sigma$-finite measure space, with $\mu$ non-negative and countably additive, and let $f:T\to X$ be any mapping.
If $f$ is simple-Birkhoff integrable, then it is Birkhoff integrable.
\end{theorem}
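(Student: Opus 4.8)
The goal is to upgrade Theorem~\ref{mainequiv} by dropping the boundedness hypothesis on $f$. The natural strategy is to reduce the unbounded case to the bounded one on pieces, using Lemma~\ref{flimitata} to control the behaviour of $f$ partition–block by partition–block, and then to glue the resulting Birkhoff integrals together along a $\sigma$-finite exhaustion. First I would fix $\vp>0$ and take a partition $P_{\vp}:=\{E_1,E_2,\dots\}$ witnessing simple-Birkhoff integrability of $f$ with integral $x$, so that for every finer partition $\{B_k\}$ and every choice of tags one has $\limsup_k\|\sum_{i=1}^k f(t_i)\mu(B_i)-x\|\leq \vp$. Applying this inequality twice (once with tags $t_i$, once with tags $t_i'$, both inside the {\em same} finer partition) gives
\begin{eqnarray}\label{plan-diff}
\limsup_k\Big\|\sum_{i=1}^k (f(t_i)-f(t'_i))\mu(B_i)\Big\|\leq 2\vp
\end{eqnarray}
for every finer partition $\{B_k\}$ and all choices of tags. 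In particular \eqref{plan-diff} holds for $P_{\vp}$ itself, so Lemma~\ref{flimitata} applies with $M=2\vp$ and tells us that $f$ is bounded on each $E_n$ of positive measure (with bound $2\vp/\mu(E_n)$); on the null sets $E_n$ the values of $f$ are irrelevant to every Riemann-type sum, so we may ignore them.

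\textbf{Reduction to the bounded, finite-measure case.} Now each $E_n$ has finite measure (by $\sigma$-finiteness we may first refine $P_{\vp}$ so that every block has finite measure, which only makes the hypotheses easier) and $f$ is bounded on $E_n$. By Theorem~\ref{ex3.7} (additivity) $f$ restricted to $E_n$ is simple-Birkhoff integrable there, hence by Theorem~\ref{mainequiv} (or directly by Theorems~\ref{ex4.1} and \ref{partfinite}, since on $E_n$ the measure is finite and $f$ bounded) $f$ is Birkhoff integrable on each $E_n$, say with integral $x_n$. Theorem~\ref{versopettis} already gives that $f$ is Pettis integrable on all of $T$; in particular $\sum_n \langle x^*, x_n\rangle = \langle x^*, x\rangle$ for every $x^*\in X^*$ and the Pettis integral of $f$ over $\bigcup_{n}E_n=T$ is $x$.

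\textbf{Gluing.} It remains to pass from ``Birkhoff integrable on each $E_n$'' plus ``Pettis integrable on $T$'' to ``Birkhoff integrable on $T$''. This is exactly the mechanism already exploited in the proof of Theorem~\ref{mainequiv}: the single-valued function $f$ satisfies condition $(ii)$ of \cite[Lemma 3.2]{cr2005} (Birkhoff integrability on each piece of a countable partition together with Pettis integrability of the whole), and the crucial implication $(ii)\Rightarrow(i)$ of that Lemma does not use finiteness of the ambient measure. Hence $f$ is Birkhoff integrable on $T$, with integral $x$, which completes the proof. Alternatively, one can argue directly: the tail estimate $\sum_{n>N}\|x_n\|\to 0$ follows from \eqref{plan-diff} applied to the blocks $E_n$, $n>N$ (this bounds the oscillation of the partial sums over the tail uniformly in the tags, hence the Birkhoff-sums over the tail are small), and combining this with the Birkhoff integrability on the finite union $E_1\cup\dots\cup E_N$ yields the Birkhoff integrability on $T$ by the usual Cauchy-type characterisation of the Birkhoff integral.

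\textbf{Main obstacle.} The delicate point is the gluing step: knowing Birkhoff integrability on each $E_n$ separately does {\em not} automatically give it on the union, because the Birkhoff integral requires an unconditional (i.e. rearrangement-insensitive) control of the {\em global} Riemann sums, and a priori the contributions of the infinitely many blocks could fail to be summable in the needed uniform way. The hypothesis of simple-Birkhoff integrability on $T$ is precisely what rescues this — via inequality \eqref{plan-diff} it forces the tail sums $\sum_{n>N}f(t_n)\mu(E_n)$ to be small uniformly over all choices of tags, which is the quantitative input that \cite[Lemma 3.2]{cr2005} (or the direct tail argument above) needs. I would spend most of the write-up making that tail estimate explicit and then citing the relevant implication of \cite[Lemma 3.2]{cr2005} for the bookkeeping, exactly as in Theorem~\ref{mainequiv}.
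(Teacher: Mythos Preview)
Your proposal follows the paper's proof essentially step for step: apply Lemma~\ref{flimitata} to a witnessing partition to get boundedness of $f$ on each block $E_n$ of positive measure, deduce Birkhoff integrability of $f$ on each $E_n$ via the bounded finite-measure case (Theorems~\ref{ex4.1} and \ref{partfinite}), invoke Theorem~\ref{versopettis} for global Pettis integrability, and then glue using \cite[Lemma~3.2]{cr2005} exactly as in Theorem~\ref{mainequiv}. One small slip: Theorem~\ref{ex3.7} only gives additivity from pieces to the union, not the reverse, so it cannot be cited for ``$f$ restricted to $E_n$ is simple-Birkhoff integrable''; the paper, like you, leaves this restriction step implicit.
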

\begin{proof}
Since $f$ is simple-Birkhoff integrable, there exists a partition $$P_1:=\{E_1,E_2,...,E_n...\}$$ such that, for every choice of points $t_n\in E_n$ one has
$$\limsup_n\|\sum_{i=1}^nf(t_i)\mu(E_i)\|\leq M$$
for a suitable positive real number $M$ (it suffices to take $\vp=1$ in the definition of simple-Birkhoff integrability). Then obviously also the condition (\ref{maxlim}) is satisfied. So, by Lemma \ref{flimitata}, $f$ is bounded in each set $E_n$ such that $\mu(E_n)\neq 0$, and, since $\mu(E_n)<+\infty$ for all $n$, $f$ turns out to be Birkhoff integrable in each set $E_n$ of the partition $P_1$. Moreover, $f$ is Pettis integrable by Theorem \ref{versopettis}, so the result of  \cite[Lemma 3.2]{cr2005} can be applied also in this case, and $f$ is Birkhoff integrable.
\end{proof}

Of course, the result of the last Theorem holds true in the following case:
\begin{corollary}
Let $F: T \to cwk(X) (ck(X))$. If $F$ is  is simple-Birkhoff integrable, then it is Birkhoff integrable. 
\end{corollary}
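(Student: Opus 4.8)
The plan is to deduce this multivalued statement from the single-valued Theorem~\ref{sBvsB} by passing through the R{\aa}dstr\"{o}m embedding, exactly in the spirit of Corollaries~\ref{jFsB} and~\ref{jFP}. Throughout, $(T,\mathcal{A},\mu)$ is understood to carry the hypotheses inherited from Theorem~\ref{sBvsB}: $\mu$ is $\sigma$-finite, non-negative and countably additive. I would treat the case $F:T\to cwk(X)$ in detail, the case $ck(X)$ being identical after replacing the embedding of $cwk(X)$ by the corresponding embedding of $ck(X)$ provided by Lemma~\ref{labu}.

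First I would invoke Corollary~\ref{jFsB}: since $F$ is Birkhoff simple integrable in $cwk(X)$, the single-valued map $j\circ F:T\to C(\Omega)$ is Birkhoff simple integrable, where $j$ is the R{\aa}dstr\"{o}m isometry of Lemma~\ref{labu} and $\Omega$ the associated compact Hausdorff space. Next, observing that $C(\Omega)$ is a Banach space and that $(T,\mathcal{A},\mu)$ is a $\sigma$-finite measure space with $\mu\geq 0$ countably additive, I would apply Theorem~\ref{sBvsB} to $j\circ F$ and conclude that $j\circ F$ is Birkhoff integrable. By Definition~\ref{2.1cr2004} this is exactly the assertion that $F$ is Birkhoff integrable, which finishes the argument. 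For completeness one may add that the Birkhoff integral of $j\circ F$ lies in the norm-closed set $j(cwk(X))$ (by (\ref{labu}.c)) and hence corresponds to a genuine element of $cwk(X)$; together with the Pettis integrability of $F$ granted by Corollary~\ref{jFP}, this shows that the Birkhoff integral of $F$ is well defined in $cwk(X)$.

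Since all the substantive work is already done in Theorem~\ref{sBvsB} and Corollary~\ref{jFsB}, I do not expect any genuine obstacle here. The only point requiring a moment's care is to check that the hypotheses on $\mu$ demanded by Theorem~\ref{sBvsB} are indeed in force and that the reduction to $j\circ F$ is legitimate in both directions, i.e. that Birkhoff simple integrability and Birkhoff integrability are each preserved, back and forth, by the embedding $j$; but this is precisely the content of Corollary~\ref{jFsB} and of the embedding-independence remark following Definition~\ref{2.1cr2004}.
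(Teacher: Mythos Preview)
Your proposal is correct and follows essentially the same approach as the paper, which simply states that the corollary follows directly from the R{\aa}dstr\"{o}m embedding theorem (Lemma~\ref{labu}) and Theorem~\ref{sBvsB}. You have merely unpacked the details, invoking Corollary~\ref{jFsB} and Definition~\ref{2.1cr2004} explicitly, but the underlying route is identical.
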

\begin{proof}
It follows directly from  R{\aa}dstr\"{o}m embedding Theorem (Lemma \ref{labu}) and Theorem \ref{sBvsB}.
\end{proof}
\section{A comparison with the Mc Shane multivalued integral}\label{four}
\noindent
So far, it has been proved that the definition of simple-Birkhoff integrability is an extension of the usual Birkhoff integrability as introduced in \cite{cr2004} for $cwk(X)$-valued mappings. Another interesting property of simple-Birkhoff integrability is that it implies a corresponding notion of Mc Shane integrability (see for example \cite{bms2011}) for $b(X)$-valued functions.\\

Assume that  $\mathscr{T}$ is a topology on $T$
 making $(T,\mathscr{T}, {\cal A}, \mu)$ a $\sigma$-finite quasi-Radon
 measure space which is \em outer regular, \rm namely such that
 $$\mu(B)=\inf \{ \mu(G):B \subseteq G \in \mathscr{T} \}\quad \mbox{for \, all  \,}
\, B \in {\cal A} .$$
\noindent A \em generalized Mc Shane partition \rm $\Pi$ of $T$
(\cite[Definitions 1A]{F2}) is a disjoint sequence $(E_i, t_i)_{i \in \mathbb{N}}$
of measurable sets of finite measure, with $t_i \in T$ for every $i \in \mathbb{N}$ and
$\mu(T \setminus \bigcup_i E_i)=0$. \\
A generalized Mc Shane partition  $(E_i, t_i)_{i \in \mathbb{N}}$ is said to be {\em univocally tagged} (in short, u.t.)
if each tag $t_i$ corresponds to just one of the sets $E_i$.
\\
If, moreover, $t_i\in E_i$ for all $i$, then the partition is said to be of the {\em Henstock} type.
A \em gauge \rm on $T$ is a function $\Delta: T \rightarrow \mathscr{T}$ such that $s \in \Delta(s)$ for every $s \in T$.
A generalized Mc Shane partition $(E_i,t_i)_i$ is \em  $\Delta$-fine \rm if $E_i \subset \Delta(t_i)$ for every $i \in \mathbb{N}$.\\
From now on, let ${\cal P}_s$ be the class of all generalized Mc Shane partitions of $T$
and by ${\cal P}_{\Delta}$  the set of all \mbox{$\Delta$}-fine elements of ${\cal P}_s$.\\

\begin{definition}\label{robanuova}\rm
Let $F:T\to b(X)$ be any multivalued mapping. $F$ is said to be {\em Mc Shane-integrable} if there exists an element $I\in bf(X)$ such that, for every $\vp>0$ there exists a gauge $\Delta$ on $T$ with the property that, for every $\Delta$-fine Mc Shane partition $P:=\{(t_i,A_i),i\in \mathbb{N}\}$ the following holds:
$$\limsup_nh(\sum_{i=1}^nF(t_i)\mu(A_i),I)\leq \vp.$$ 
In this case, $I$ is said to be the {\em Mc Shane integral} of $F$.
A formally more general notion is obtained, called {\em u.t. Mc Shane integrability}, if in the above requirements only u.t. partitions are involved.
\end{definition}
Observe that this notion of Mc Shane integral includes the one given  in \cite{bs2004}  for $cwk(X)$- or $ck(X)$-valued mappings.

\begin{lemma} \rm \label{nota2}
Assume that every point in $T$ is contained in an open set of finite measure, and let $F:T\to b(X)$ be any bounded function. If $F$ is u.t. Mc Shane integrable, with integral $I$, then it is also McShane integrable.
\end{lemma}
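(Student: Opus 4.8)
\emph{Strategy.} The plan is to use the additional hypothesis to replace, for a prescribed accuracy, an arbitrary $\Delta$-fine generalized Mc Shane partition by a u.t.\ one carrying essentially the same Riemann sums, and then to invoke u.t.\ Mc Shane integrability of $F$.

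\emph{Construction of the gauge.} First I would fix $\vp>0$ and a constant $M$ with $|F(t)|\le M$ for every $t\in T$. By u.t.\ Mc Shane integrability of $F$ there is a gauge $\Delta_0$ on $T$ such that every $\Delta_0$-fine u.t.\ generalized Mc Shane partition $(E_k,s_k)_k$ satisfies $\limsup_m h\big(\sum_{k=1}^mF(s_k)\mu(E_k),I\big)\le\vp$. Using the hypothesis, for each $t\in T$ pick an open set $U_t\ni t$ with $\mu(U_t)<\infty$ and set $\Delta(t):=\Delta_0(t)\cap U_t$; this is again a gauge on $T$, and I claim it witnesses Mc Shane integrability of $F$, with the same integral $I$, up to replacing $\vp$ by a fixed multiple of it.

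\emph{Passage to a u.t.\ partition.} Let $\Pi=(A_i,t_i)_{i\in\mathbb N}$ be any $\Delta$-fine generalized Mc Shane partition, in which a tag may now occur more than once. Let $\{\tau_k\}_k$ enumerate the distinct tags and, for each $k$, put $S_k:=\bigcup\{A_i:\ t_i=\tau_k\}$. Since every such $A_i$ is contained in $\Delta(\tau_k)\subseteq U_{\tau_k}$, the set $S_k$ is measurable with $\mu(S_k)<\infty$ and $S_k\subseteq\Delta_0(\tau_k)$. The $S_k$ are pairwise disjoint, $\mu\big(T\setminus\bigcup_kS_k\big)=\mu\big(T\setminus\bigcup_iA_i\big)=0$, and each tag labels exactly one $S_k$; hence $\Pi':=(S_k,\tau_k)_k$ is a $\Delta_0$-fine u.t.\ generalized Mc Shane partition, so $\limsup_m h\big(\sum_{k=1}^mF(\tau_k)\mu(S_k),I\big)\le\vp$.

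\emph{Comparison of the sums, and the main obstacle.} It remains to compare the partial sums of $\Pi$ with those of $\Pi'$: grouping in $\sum_{i=1}^nF(t_i)\mu(A_i)$ the summands sharing a common tag $\tau_k$ and using Proposition \ref{ex2.8}.ii), one is reduced to estimating, for each $k$, the Hausdorff distance between the Minkowski combination $\sum\{F(\tau_k)\mu(A_i):\ i\le n,\ t_i=\tau_k\}$ and the single term $F(\tau_k)\mu(S_k)$; this is then handled via Proposition \ref{ex2.8}.i), Lemma \ref{prop-h}, the countable additivity of $\mu$ and the boundedness of $F$, and, fed into the triangle inequality for $h$ together with the estimate for $\Pi'$, it would give $\limsup_n h\big(\sum_{i=1}^nF(t_i)\mu(A_i),I\big)\le C\vp$ for an absolute constant $C$; as $\vp>0$ is arbitrary, $F$ is Mc Shane integrable with integral $I$. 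I expect exactly this comparison to be the delicate step: at a repeated tag $\tau_k$ the merged term $F(\tau_k)\mu(S_k)$ need not equal the Minkowski combination of the individual terms $F(\tau_k)\mu(A_i)$ when $F(\tau_k)$ is not convex, and one must also absorb the tail contributions coming from the $\sigma$-finiteness of $\mu$; it is here that the boundedness of $F$ and the finiteness of the measures $\mu(U_t)$ enter in an essential way.
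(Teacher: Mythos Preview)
Your strategy---shrink the gauge so that each $\Delta(t)$ has finite measure, then, given a $\Delta$-fine partition $\Pi=(A_i,t_i)_i$, pass to the u.t.\ partition $\Pi'=(S_k,\tau_k)_k$ obtained by merging the $A_i$ with common tag---is exactly the paper's. The point where your sketch stops short is the one you flag yourself: comparing the partial sums of $\Pi$ with those of $\Pi'$. Here the paper inserts an intermediate object that you do not: for every distinct tag $\tau_k$, using $\mu(S_k)<\infty$ it selects \emph{finitely many} of the $A_i$ subordinate to $\tau_k$, say $B_1^k,\dots,B_{j(k)}^k$, so that the leftover $S_k\setminus\bigcup_iB_i^k$ has measure at most $\vp/(4^kM)$; the union $Q$ of all leftovers then satisfies $\mu(Q)\le\vp/(3M)$. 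The finite Minkowski sum $\sum_{k\le N}\sum_{i\le j(k)}F(\tau_k)\mu(B_i^k)$ is a genuine sub-sum of the original partial sum $\sum_{j\le N_1'}F(t_j)\mu(A_j)$, so Lemma~\ref{prop-h} compares them directly with error at most $M\mu(Q)\le\vp/3$, with no appeal to convexity. This finite-truncation device is precisely what absorbs the tail contributions and the index-alignment problem you anticipate, and it is the ingredient your sketch is missing.

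Your hesitation about non-convexity is, however, well placed. For $C\in b(X)$ not convex one need not have $(\alpha+\beta)C=\alpha C\oplus\beta C$, so the distance $h\bigl(F(\tau_k)\mu(S_k),\ \sum_iF(\tau_k)\mu(B_i^k)\bigr)$ is \emph{not} controlled by $\bigl|\mu(S_k)-\sum_i\mu(B_i^k)\bigr|\cdot|F(\tau_k)|$ in general (take $C=\{0,1\}\subset\erre$ and $\alpha=\beta=1$). The paper's corresponding step, inequality~(\ref{confronto}), justified ``for the same reason'' as~(\ref{riduz}), rests on the same identity; as written it is only valid when the values $F(\tau_k)$ are convex. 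So the obstacle you single out is not a defect peculiar to your outline---it sits at the same place in the paper's own argument.
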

\begin{proof}
Let $M$ denote any positive constant, dominating $|F|$, and fix arbitrarily $\vp >0$. Then there exists a gage $\Delta$ fitting the condition of u.t. Mc  Shane integrability with respect to $\vp 3^{-1}$, and, without loss of generality, one can assume that $\mu(\Delta(t))<+\infty$ for all $t\in \Omega$.
\\
Now, let $P\equiv\{(B_j,w_j):j \in \mathbb{N}\}$ be any $\Delta$-fine Mc Shane partition.
For each tag $w_k$, let us denote by $H_k$ the union of those sets $B_j$ that are associated to the same tag $w_k$. Since $\mu$ is countably additive and $\mu(H_k)\leq \mu(\Delta(w_k))<+\infty$, it is possible to find a finite number of these sets $B_j$, denoted by
$$B_1^k,...,B_{j(k)}^k,$$
such that
$$\mu\left(H_k\setminus (\bigcup_{i=1}^{j(k)} B_i^k )\right)\leq \frac{\vp}{4^kM}.$$
Hence, setting $Q:=\bigcup_k\left(H_k\setminus (\bigcup_{i=1}^{j(k)} B_i^k )\right)$, one has $\mu(Q)\leq \vp (3M)^{-1}.$
Now, observe that $\{H_k,w_k):k\in \mathbb{N}\}$ is a $\Delta$-fine u.t. Mc Shane partition, so it satisfies
$$\limsup_nh\left(\sum_{k=1}^nF(w_k)\mu(H_k),I\right)\leq \frac{\vp}{3}.$$
Hence, there exists an integer $N$ such that
\begin{eqnarray}{\label{uniq}}
h\left(\sum_{k=1}^{N'}F(w_k)\mu(H_k),I\right)\leq \frac{\vp}{3}
\end{eqnarray}
for all $N'\geq N$.
In correspondence with $N$, there exists a (larger) integer $N_1$ such that the family $\{B_j:j\leq N_1\}$ includes all the sets of the type
$$B_i^k, i=1,...,j(k), \ \ k=1,...,N.$$
Remark that, if $j>N$, then $B_j\subset Q$.
So, for each integer $N_1'\geq N_1$ one has
\begin{eqnarray}\label{riduz}
h\left(\sum_{j=1}^{N_1'}F(w_j)\mu(B_j),\sum_{k=1}^N\sum_{i=1}^{j(k)}F(w_k)\mu(B_i^k)\right)
\leq \frac{\vp}{3}
\end{eqnarray}
since each set of the first summand which is not in the second one is included in $Q$, and $\mu(Q)\leq \vp (3M)^{-1}$.
For the same reason, one sees that
\begin{eqnarray}\label{confronto}
h\left(\sum_{k=1}^NF(w_k)\mu(H_k),\sum_{k=1}^N\sum_{i=1}^{j(k)}F(w_k)\mu(B_i^k)\right)\leq \frac{\vp}{3}.
\end{eqnarray}
Now, taking into account of (\ref{riduz}), (\ref{confronto}) and (\ref{uniq}), it follows
\begin{eqnarray*}
&& h\left(\sum_{j=1}^{N_1'}F(w_j)\mu(B_j),I\right)\leq 
h\left(\sum_{j=1}^{N_1'}F(w_j)\mu(B_j),\sum_{k=1}^N\sum_{i=1}^{j(k)}F(w_k)\mu(B_i^k)\right)+\\
&+&
h\left(\sum_{k=1}^NF(w_k)\mu(H_k),\sum_{k=1}^N\sum_{i=1}^{j(k)}F(w_k)\mu(B_i^k)\right)+
h\left(\sum_{k=1}^{N}F(w_k)\mu(H_k),I\right)\leq \vp.
\end{eqnarray*}
Since $N_1'\geq N_1$ was arbitrary, this show that
$$\limsup_nh\left(\sum_{j=1}^nF(w_j)\mu(B_j),I\right)\leq \vp$$
and, by arbitrariness of $P$, Mc Shane integrability of $F$ follows.
\end{proof}
\begin{remark} \rm \label{nota1}
If $F: T \rightarrow cwk(X)$ is Mc Shane integrable, then its integral coincides with
the following set:
\begin{eqnarray*}\label{int*}
\Phi(F,T) &=& \{ x \in X: \forall \, \varepsilon > 0, \exists \mbox{~a gauge~} \Delta :
\mbox{~~for every generalized \Pd} \\ & &
 \mbox{ Mc Shane partition}
(E_i,t_i)_{i\in \mathbb{N}} \mbox{~there holds:~}
\\ & &  \limsup_n \, d(x, \sum_{i=1}^{n} F(t_i) \mu(E_i)) \leq \varepsilon \}.
\end{eqnarray*}
(See \cite[Proposition 1]{bs2004}). The set $\Phi(F,T)$ is called the
 $(\star)$-integral of $F$.
No measurability is required a priori;
 moreover, if $F$ is single-valued, then $\Phi(F, T)$ coincides with the classical Mc Shane integral, if it exists.
\end{remark}

The simple Birkhoff integrability implies the Mc Shane one under suitable conditions: a first formulation holds in the countably additive case, for bounded functions $F$; a second theorem requires just finite additivity, but $F$ must be bounded and with compact support.
The pointwise non atomicity of  $\mu$ ensures that 
 all the partitions in the Henstock sense will turn out to be equivalent to the same concepts in the Mc Shane sense
i.e. without requiring that the  tags  $t_i$ are contained in the corresponding sets of the involved partitions
 (see also for example \cite[Proposition 2.3]{bcs2014}).

\begin{theorem}\label{wbvms-ca}
Suppose that $\mu$ is a pointwise non atomic  $\sigma$-finite measure.
 Let $F: T \rightarrow b(X)$ be a bounded  simple Birkhoff integrable multifunction.  
Then $F$ is also  Mc Shane integrable and the two integrals agree.
\end{theorem}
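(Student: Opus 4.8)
The plan is to build, for each $\vp>0$, a gauge $\Delta$ out of the countable partition supplied by simple Birkhoff integrability, using outer regularity of $\mu$, and then to control the $\limsup$ of the Riemann-type sums of $F$ along any $\Delta$-fine generalized Mc Shane partition in terms of $I:=(B_s)\int_T Fd\mu$. Let $M$ bound $|F|$. Fix $\vp>0$ and take a countable partition $P_\vp=\{E_n\}_n$ of $T$ as in Definition \ref{ex3.5}; since $\mu$ is $\sigma$-finite and refinements of $P_\vp$ remain admissible, one may intersect $P_\vp$ with a decomposition of $T$ into sets of finite measure and so assume $\mu(E_n)<\infty$ for all $n$. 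By outer regularity, pick for each $n$ an open set $G_n\supseteq E_n$ with $\mu(G_n)<\infty$ and $\mu(G_n\setminus E_n)\le \vp\,2^{-n}M^{-1}$, and define $\Delta(t):=G_n$ for $t\in E_n$; then $t\in\Delta(t)$, so $\Delta$ is a gauge, and every point of $T$ lies in some $G_n$ of finite measure, so Lemma \ref{nota2} will be applicable. Since $\mu$ is pointwise non atomic, Henstock-type and general generalized Mc Shane partitions are interchangeable for integrability purposes (this is exactly the content of \cite[Proposition 2.3]{bcs2014} and of the remark preceding the statement), so it is enough to estimate the sums along an arbitrary $\Delta$-fine u.t.\ Mc Shane partition of Henstock type $(B_j,w_j)_j$.

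Next I cut each $B_j$ along $P_\vp$. Write $n(j)$ for the index with $w_j\in E_{n(j)}$; since $B_j\subseteq\Delta(w_j)=G_{n(j)}$, for $n\ne n(j)$ one has $B_j\cap E_n\subseteq G_{n(j)}\setminus E_{n(j)}$, whence $B_j\setminus E_{n(j)}\subseteq G_{n(j)}\setminus E_{n(j)}$ and, by disjointness of the $B_j$'s, $\sum_j\mu(B_j\setminus E_{n(j)})\le\sum_n\mu(G_n\setminus E_n)\le \vp M^{-1}$. Consequently, by Proposition \ref{ex2.8}, for every $N$
$$h\Big(\sum_{j=1}^N F(w_j)\mu(B_j),\ \sum_{j=1}^N F(w_j)\mu(B_j\cap E_{n(j)})\Big)\le M\sum_{j=1}^N\mu(B_j\setminus E_{n(j)})\le \vp.$$
Moreover, the partition being of Henstock type, $w_j\in B_j\cap E_{n(j)}$, so $w_j$ is a legitimate tag for the set $B_j\cap E_{n(j)}$, which is contained in the member $E_{n(j)}$ of $P_\vp$.

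It remains to bound $\limsup_N h\big(\sum_{j=1}^N F(w_j)\mu(B_j\cap E_{n(j)}),I\big)$. The pairwise disjoint sets $B_j\cap E_{n(j)}$, the leftover pieces $B_j\cap E_n$ with $n\ne n(j)$ (of total measure $\le \vp M^{-1}$ by the above), and the $\mu$-null remainders $E_n\setminus\bigcup_j B_j$ together form a countable partition $\mathcal Q\ge P_\vp$ of $T$. Enumerate $\mathcal Q$ so that it has infinitely many initial segments consisting precisely of $B_1\cap E_{n(1)},\dots,B_N\cap E_{n(N)}$ (with $N\to\infty$) together with further pieces of total measure at most $\vp M^{-1}$, and tag each set by one of its points, $w_j$ being used for $B_j\cap E_{n(j)}$. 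Then Definition \ref{ex3.5} gives $\limsup_k h(\cdot,I)\le\vp$ for the partial sums over $\mathcal Q$; restricting to those initial segments and discarding, via Proposition \ref{ex2.8}, the extra terms (a finite Minkowski sum of sets of total ``mass'' $\le\vp M^{-1}$, hence $\vp$-close to $\{0\}$) and the null terms ($=\{0\}$) yields $\limsup_N h\big(\sum_{j=1}^N F(w_j)\mu(B_j\cap E_{n(j)}),I\big)\le 2\vp$, and therefore $\limsup_N h\big(\sum_{j=1}^N F(w_j)\mu(B_j),I\big)\le 3\vp$. Since $\vp>0$ and the $\Delta$-fine partition are arbitrary while $\Delta$ depends only on $\vp$, replacing $\vp$ by $\vp/3$ shows that $F$ is u.t.\ Mc Shane integrable with integral $I$; by Lemma \ref{nota2}, $F$ is Mc Shane integrable with the same integral $I$.

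I expect the main obstacle to be precisely that the sets $B_j$ of a $\Delta$-fine partition need not refine $P_\vp$: one has to split them along $P_\vp$ while keeping the tags inside their sets, and this is exactly where the two structural hypotheses enter — outer regularity lets one manufacture $\Delta$ from $P_\vp$ so that the non-matching pieces $B_j\setminus E_{n(j)}$ carry negligible total measure, and pointwise non atomicity lets one replace an arbitrary Mc Shane partition by a Henstock one. Granted these, the rest is routine triangle-inequality bookkeeping with Proposition \ref{ex2.8}.
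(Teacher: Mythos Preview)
Your proposal is correct and follows essentially the same architecture as the paper's proof: both reduce via Lemma~\ref{nota2} to u.t.\ partitions, build the gauge from open supersets $G_n\supseteq E_n$ given by outer regularity, pass to Henstock type via pointwise non-atomicity, and split each $B_j$ into the ``matching'' piece $B_j\cap E_{n(j)}$ and a residual part of small total measure.

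The one genuine difference is in the last step. The paper applies the simple Birkhoff condition \emph{directly to $P_\vp$}, tagging each $E_r$ by the corresponding $w_n$ when possible, and then uses the decomposition $E_r=(E_r\cap Q)\cup(B_i\cap E_{j(i)})$ (up to null) to compare $\sum_r F(t_r)\mu(E_r)$ with $\sum_i F(w_i)\mu(B_i\cap E_{j(i)})$. You instead form the common refinement $\mathcal Q\ge P_\vp$ and exploit the freedom to \emph{enumerate} $\mathcal Q$ so that the desired partial sums appear as a subsequence, modulo junk of controlled mass. Your route is arguably cleaner because it avoids the bookkeeping with the map $n\mapsto j(n)$ (which in the paper is delicate when several tags $w_n$ lie in the same $E_k$); the paper's route avoids having to specify an enumeration of a double-indexed family. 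The constants differ ($2^{-n}$ vs.\ $4^{-n}$, final bound $3\vp$ vs.\ $\vp$), but this is immaterial. Your enumeration argument would benefit from one explicit sentence (e.g.\ interleave $B_j\cap E_{n(j)}$ with the junk pieces and look at even indices), but the idea is sound and the $\limsup$ along a subsequence is bounded by the full $\limsup$, which is all you need.
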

\begin{proof}
Thanks to the Lemma \ref{nota2}, it will be sufficient to prove that $F$ is u.t. Mc Shane integrable.
Let $M$ be an upper bound for $|F|$.
By simple Birkhoff integrability of $F$    there exists $I_w \in b(X)$ 
such that
for every $\varepsilon >0$, there exist a countable partition $P_{\varepsilon}:=\{E_{n}\}_{n\in \mathbb{N}}$
of $T$  so that for every countable
partition $P=\{A_{n}\}_{n\in \mathbb{N}}$ of $T$, with $P\geq P_{\varepsilon}$ and every $t_{n}\in A_{n},n\in \mathbb{N}$, one has 
\begin{eqnarray}\label{wb1}
 \limsup_n h\left(\sum_{i=1}^nF(t_i)\mu(A_i),I_w\right)\leq \vp/3.
\end{eqnarray}
Since $\mu$ is outer regular consider the sequence of open sets  $(G_n)_n$ associated to  $P_{\varepsilon}$ 
such that for every $n \in \mathbb{N}$, $\mu(G_n)<+\infty, 
E_n \subset G_n \in \mathscr{T} $ and $\mu(G_n \setminus E_n) \leq \varepsilon (M 4^n)^{-1}$.\\
 Let now $\Delta^* : T \rightarrow \mathscr{T}$ be defined by:
\[ \Delta^* (t) := \sum_{n \in \mathbb{N}} G_n 1_{E_n}.\]
Let now $\Pi \in {\cal P}_{\Delta^*}, \Pi := \{(B_n, w_n), n \in \mathbb{N}\}$ be any u.t. $\Delta^*$-fine Mc Shane partition. 
It will be proven that
$$\limsup_nh \left(\sum_{i=1}^n F(w_i)\mu(B_i),I_w\right) \leq \vp.$$
Since $\mu$ is non-atomic, without loss of generality $\Pi$ can be assumed to be of the Henstock type: otherwise it will suffice to replace every set $B_n$ with $(B_n\cup\{w_n\})\setminus C$, where $C$ is the set of all tags $w_n$.
For every $n \in \mathbb{N}$ let $j(n)$ be the only index such that
 $w_n \in E_{j(n)}$, and so $B_n \subset G_{j(n)}$. 
Suitably enumerating the sets $E_k$ , one can assume that the mapping $n\mapsto j(n)$ is increasing.
Split each $B_n$ into two parts:
$B_n \cap E_{j(n)}$ and $B_n \setminus E_{j(n)}$ in such a way that:
\begin{eqnarray}\label{2pezzi}
\nonumber
\mu(B_n) &=& \mu(B_n \cap E_{j(n)}) + \mu(B_n \setminus E_{j(n)}) \leq 
\mu(B_n \cap E_{j(n)}) + \mu(G_{j(n)}  \setminus E_{j(n)})\\ &\leq&
 \mu(B_n \cap E_{j(n)}) +
\varepsilon (M 4^{j(n)})^{-1}
\end{eqnarray}
Moreover, thanks to countable additivity of $\mu$, one has
$$\mu\left(\bigcup_n(B_n\setminus E_{j(n)})\right)\leq \vp M^{-1}\sum_{j=1}^{+\infty}4^{-j}=\vp(3M)^{-1}.$$
For further convenience, the set $\bigcup_n(B_n\setminus E_{j(n)})$ will be denoted as $Q$. Since $\mu(Q)\leq \vp(3M)^{-1}$, it is easy to see that 
\begin{eqnarray}\label{Bridotta}
h\left(\sum_{i=1}^n F(w_i)\mu(B_i),\sum_{i=1}^n F(w_i)\mu(B_i\cap E_{j(i)})\right)\leq M\mu(Q)\leq \frac{\vp}{3}, \end{eqnarray} for all integers $n$.
Now, to  every set $E_j$ of $P_{\vp}$ associate a tag $t_j$ as follows: $t_j=w_{j(n)}$ if $E_j=E_{j(n)}$ for some $n$, otherwise let $t_j$ be any arbitrary point of $E_j$. In this way $P_{\vp}$ becomes a Henstock-type partition, and certainly satisfies
\begin{eqnarray}\label{wb2}
 \limsup_n h\left(\sum_{j=1}^nF(t_j)\mu(E_j),I_w\right)\leq \vp/3.
\end{eqnarray}
So, there exists an integer $N'$ large enough that
\begin{eqnarray}\label{birkhoff}
h\left(\sum_{r=1}^{N''}F(t_r)\mu(E_r),I_w\right)\leq \vp/3
\end{eqnarray}
holds, for every integer $N''\geq j(N')$. In particular, if $N$ is any integer larger that $N'$, then $j(N)\geq j(N')$.

Except for a null set, $\Omega=Q\cup[\bigcup_n(B_n\cap E_{j(n)})]$,
hence, for every integer $k$, one has
$$E_k=(E_k\cap Q)\cup [E_k\cap B_1\cap E_{j(1)}]\cup [E_k\cap B_2\cap E_{j(2)}]\cup...$$
and, since the sets $E_j$ are pairwise disjoint, all the intersections $E_k\cap B_i\cap E_{j(i)}$ are null sets, unless $E_k=E_{j(i)}$ for one $i$ (and only one). So, if $E_k$ is not of the type $E_{j(i)}$, then (except for a null set), $E_k=E_k\cap Q$; while, otherwise, $E_k=(E_k\cap Q)\cup (B_i\cap E_{j(i)})$,
(where $k=j(i)$). 
So, chosen arbitrarily $N\geq N'$, this leads to
$$\sum_{r=1}^{j(N)}F(t_r)\mu(E_r)=\sum_{r=1}^{j(N)}
F(t_r)\mu(E_r\cap Q)+\sum_{i=1}^NF(t_i)\mu(B_i\cap E_{j(i)}).$$
This implies that
\begin{eqnarray}\label{passaggio}
h\left(\sum_{i=1}^NF(t_i)\mu(B_i\cap E_{j(i)}),\sum_{r=1}^{j(N)}F(t_r)\mu(E_r)\right)\leq M\mu(Q)\leq \frac{\vp}{3}.
\end{eqnarray}
So, from (\ref{Bridotta}), (\ref{passaggio}) and (\ref{birkhoff}), one deduces that
\begin{eqnarray*}
h\left(\sum_{i=1}^NF(t_i)\mu(B_i),I_w\right) &\leq&
h\left(\sum_{i=1}^N F(w_i)\mu(B_i),\sum_{i=1}^N F(w_i)\mu(B_i\cap E_{j(i)})\right)+
\\ &+&
h\left(\sum_{i=1}^NF(t_i)\mu(B_i\cap E_{j(i)}),\sum_{r=1}^{j(N)}F(t_r)\mu(E_r)\right)+
\\ &+&
h\left(\sum_{r=1}^{j(N)}F(t_r)\mu(E_r),I_w\right)\leq \vp.
\end{eqnarray*}
Since $N$ is arbitrarily chosen larger than $N'$, one gets
$$\limsup_nh\left(\sum_{i=1}^nF(t_i)\mu(B_i),I_w\right)\leq \vp.$$
and therefore u.t. Mc Shane integrability for $F$, by arbitrariness of the $\Delta^*$-fine partition $\Pi$. 
\end{proof}

So, it has been proved that, for $b(X)$-valued mappings, the simple-Birkhoff integrability implies the Mc Shane integrability, at least for bounded functions, likewise for the $cwk(X)$-valued case Birkhoff integrability implies Mc Shane integrability. Hence, simple-Birkhoff integrability is a natural extension of the Birkhoff one to the case of $b(X)$-valued functions.\\

The next Theorem \ref{wbvms-fa} does not involve directly countable additivity.
It is advisable to simplify notations and proofs when $\mu$ is a finite (countably additive) measure. Indeed, the following lemma holds true.
  In order to simplify notations, the symbol $s(P,F)$ will be adopted, to denote the summation $\sum_i F(t_i)\mu(A_i)$, whenever $P:=\{(A_i,t_i): i\in \mathbb{N}\} $ is a tagged partition.\\

\begin{lemma}\label{potyrala}
Assume that $(T,\mathcal{A},\mu)$ is a finite measure space, and $P:=\{(E_i,t_i):i\in \mathbb{N}\}$ is any Mc Shane partition,
and assume that $P$ is $\Delta$-fine, for some gauge $\Delta $ on $T$. Then, for every bounded function $F:T\to cwk(X)$ and every $\vp>0$ there exists a $\Delta$-fine finite partition $P':=\{(E'_j,t'_j)\}$ of $T$, in such a way that 
$$\limsup_nh\left(\sum_{j=1}^nF(t_i)\mu(E_i),\sum_{j=1}^NF(t'_i)\mu(E'_i)\right)\leq 2\vp.$$
\end{lemma}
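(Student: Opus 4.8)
The plan is to obtain $P'$ by \emph{truncating} the given countable partition $P$ to its first finitely many members and then filling in the (small) remainder; the point is that, because $\mu$ is totally finite and $F$ is bounded, discarding the tail of $P$ costs arbitrarily little. So I would first fix a constant $M$ with $|F(t)|\le M$ for all $t\in T$ (boundedness of $F$). Since the $E_i$ are pairwise disjoint and $\mu(T)<+\infty$, the series $\sum_i\mu(E_i)$ converges, so one may choose $N\in\mathbb N$ with $\sum_{i>N}\mu(E_i)<\vp/(2M)$. Put $R:=T\setminus\bigcup_{i\le N}E_i$; since $R=\big(\bigcup_{i>N}E_i\big)\cup\big(T\setminus\bigcup_iE_i\big)$ and the second set is $\mu$-null (as $P$ is a generalized Mc Shane partition), one gets $\mu(R)<\vp/(2M)$.

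Next I would let $P'$ consist of the pairs $(E_i,t_i)$, $i\le N$, together with a finite $\Delta$-fine tagged family covering $R$ up to a set of measure at most $\vp/(2M)$, listing the resulting members as $(E'_j,t'_j)$, $j=1,\dots,N'$, in such a way that the first $N$ of them are exactly the $(E_i,t_i)$, $i\le N$, and $\mu\big(\bigcup_{j>N}E'_j\big)\le\mu(R)<\vp/(2M)$.

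The estimate is then routine. Writing $s_n:=\sum_{i=1}^nF(t_i)\mu(E_i)$ and $s(P',F)=\sum_{j=1}^{N'}F(t'_j)\mu(E'_j)$, and using that $s_n=\big(\sum_{i=1}^NF(t_i)\mu(E_i)\big)\oplus\big(\sum_{i=N+1}^nF(t_i)\mu(E_i)\big)$, Lemma \ref{prop-h} (which in particular gives $h(A\oplus B,A)\le|B|$) yields, for every $n\ge N$,
$$h\Big(s_n,\sum_{i=1}^NF(t_i)\mu(E_i)\Big)\le\sum_{i>N}|F(t_i)|\,\mu(E_i)\le M\sum_{i>N}\mu(E_i)<\frac{\vp}{2},$$
and in the same way
$$h\Big(s(P',F),\sum_{i=1}^NF(t_i)\mu(E_i)\Big)\le M\,\mu\Big(\bigcup_{j>N}E'_j\Big)<\frac{\vp}{2}.$$
Adding the two bounds gives $h(s_n,s(P',F))<\vp$ for all $n\ge N$, hence $\limsup_n h(s_n,s(P',F))\le\vp\le 2\vp$, which is the assertion.

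The step that needs care — and, I expect, the only real obstacle — is the completion in the second paragraph: the residual set $R$ is in general not contained in any single $\Delta(t)$, so it cannot be adjoined as one $\Delta$-fine piece. The way around this is to exploit that the \emph{given} partition $P$ already supplies a $\Delta$-fine covering of $R$, namely the pieces $E_i$ with $i>N$: keeping finitely many of these, and collecting the (arbitrarily $\mu$-small) leftover into one further piece with an arbitrary tag, produces the required finite family. Since that leftover has measure $<\vp/(2M)$, it contributes at most $\vp/2$ to $s(P',F)$ however it is tagged, which is exactly why the final loss is $2\vp$ and not $\vp$.
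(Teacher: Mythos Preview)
Your overall strategy---truncate the countable partition at some index $N$ and then complete the first $N$ pieces to a finite $\Delta$-fine partition of $T$---is exactly the paper's, and your estimates are correct (indeed slightly sharper than needed). The problem lies precisely where you flag it: the completion step. Your proposed fix, ``keep finitely many of the $E_i$ with $i>N$ and put the leftover into one further piece with an arbitrary tag'', does \emph{not} produce a $\Delta$-fine partition. For a tagged pair $(L,t)$ to be $\Delta$-fine one needs $L\subset\Delta(t)$; the leftover $L=T\setminus\bigcup_{i\le N_1}E_i$ has no reason to be contained in any single $\Delta(t)$, no matter how small $\mu(L)$ is. Smallness of $\mu(L)$ controls the contribution of that piece to the Riemann sum, but it says nothing about $\Delta$-fineness, and the lemma explicitly requires $P'$ to be $\Delta$-fine.

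The paper closes this gap by invoking the Cousin Lemma: having kept $(E_j,t_j)$ for $j\le N$, it takes a \emph{finite} $\Delta$-fine Mc Shane partition $P''=\{(B_j,w_j):j=N+1,\dots,k\}$ of the whole tail $\bigcup_{j>N}E_j$, whose existence is guaranteed (in the quasi-Radon setting) by Cousin's Lemma. Then $P'$ is the concatenation of the first $N$ original pairs and $P''$. Your estimate goes through verbatim with this $P'$, since $\sum_{j=N+1}^{k}\mu(B_j)\le \mu\big(\bigcup_{j>N}E_j\big)$, which is already controlled by the choice of $N$.
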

\begin{proof}
Let $M$ denote any positive domination for $|F|$, and choose any integer $N$ such that $\sum_{i=N+1}^{+\infty}\mu(E_i)\leq \vp M^{-1}.$ Then define $P':=\{(E_j,t_j), j=1...,N, (B_j,w_j), \\ j=N+1,...,k\}$ in such a way that $P'':=\{(B_j,w_j), j=N+1,...,k\}$ is a $\Delta$-fine  Mc Shane partition of the set $\bigcup_{j=N+1}^{+\infty}E_j$ (its existence is due to the well-known Cousin Lemma,  see \cite[ Proposition 1.7]{riecan}). Then, for every $N'\geq N$ one has
\begin{eqnarray*}
h\left(\sum_{i=1}^{N'}F(t_i)\mu(E_i),s(P',F)\right) 
&\leq& h\left(\sum_{i=N+1}^{N'}F(t_i)\mu(E_i),s(P'',F)\right)\leq\\
&\leq&  h\left(\sum_{i=N+1}^{N'}F(t_i)\mu(E_i),\{0\}\right)+
h(s(P'',F),\{0\})\leq  2\vp.
\end{eqnarray*}
Since $N'\geq N$ is arbitrary, the conclusion is obvious.
\end{proof}

In some particular situations, the implication stated in the Theorem
\ref{wbvms-ca} holds also if the measure $\mu$ is just finitely additive, and with a much simpler proof.

\begin{theorem}\label{wbvms-fa}
Suppose that $\mu$ is a pointwise non atomic finitely additive regular
 measure. Let $F: T \rightarrow cwk(X)$ be a bounded  simple Birkhoff integrable multifunction  
 in $cwk(X)$ with compact support. Then $F$ is also  Mc Shane integrable and the two integrals agree.
\end{theorem}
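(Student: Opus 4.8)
The plan is to prove that $F$ is Mc Shane integrable with integral $I_w:=(B_s)\int_T F\,d\mu\in cwk(X)$, exploiting the compact support to keep the whole argument finitary, so that — unlike in Theorem~\ref{wbvms-ca} — no countable additivity of $\mu$ enters. Write $K$ for the compact support of $F$ and $M$ for an upper bound of $|F|$. Since $\mu$ is pointwise non atomic and outer regular, $\mu(\{t\})=0=\inf\{\mu(G):t\in G\in\mathscr T\}$ for every $t$, so each point lies in an open set of finite measure and, by compactness, $K$ is contained in an open $U$ with $\mu(U)<+\infty$; as $F\equiv\{0\}$ off $K$, every Riemann sum below only ``sees'' the finite-measure set $U$. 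The argument splits into two steps: first that $F$ is Gould integrable with integral $I_w$, then that Gould integrability, under the present hypotheses, forces Mc Shane integrability.

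For the first step, fix $\vp>0$ and take, as in Definition~\ref{ex3.5}, a countable partition $P_\vp=\{E_n\}_n$ (atoms in $cwk(X)$) realizing $\limsup_n h(\sum_{i=1}^n F(t_i)\mu(A_i),I_w)\le\vp/4$ along all finer countable partitions and all tags. By outer regularity choose open sets $G_n\supseteq E_n$ with $\sum_n\mu(G_n\setminus E_n)\le\vp/(4M)$. Since the compact $K$ is covered by $\{G_n\}_n$, finitely many of them, say $G_{n_1},\dots,G_{n_p}$, already cover $K$, and then $\mu(K\setminus\bigcup_{i\le p}E_{n_i})\le\sum_{i\le p}\mu(G_{n_i}\setminus E_{n_i})\le\vp/(4M)$. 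Set $Q_\vp:=\{E_{n_1},\dots,E_{n_p},\,T\setminus\bigcup_i E_{n_i}\}\wedge\{K,T\setminus K\}$, a finite partition of $T$. For any finite partition $P$ of $T$ finer than $Q_\vp$, with tags $t_A\in A$, the parts contained in $T\setminus K$ contribute nothing, the parts contained in $K\setminus\bigcup_i E_{n_i}$ have total $\mu$-mass $\le\vp/(4M)$, and the remaining parts, reorganized via the common refinement $P\wedge P_\vp$ (which is finer than $P_\vp$) together with the $\limsup$-estimate, give $h(s(P,F),I_w)\le\vp/2$ — only finite additivity being used, since every initial segment of $P\wedge P_\vp$ meets finitely many indices $n$. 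Hence $F$ is Gould integrable (Definition~\ref{ex3.1}) with integral $I_w$.

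For the second step I would build a gauge from the finite Gould partition $Q_\vp$: by outer regularity fatten each atom $D$ of $Q_\vp$ to an open $G_D\supseteq D$ with $\sum_D\mu(G_D\setminus D)\le\vp/(4M)$ and, using that $T\setminus K$ and $U$ are open, set $\Delta(t):=G_D\cap(T\setminus K)$ when $t\in D\subseteq T\setminus K$ and $\Delta(t):=G_D\cap U$ when $t\in D\subseteq K$; thus $\Delta$ avoids $K$ off $K$ and stays inside $U$ on $K$. Let $\Pi=\{(B_j,w_j)\}$ be an arbitrary $\Delta$-fine generalized Mc Shane partition of $T$, which by pointwise non-atomicity may be assumed of Henstock type, exactly as in the proof of Theorem~\ref{wbvms-ca}. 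The terms with $w_j\notin K$ vanish, and those with $w_j\in K$ satisfy $B_j\subseteq U$ and $\sum_{j\le J,\,w_j\in K}\mu(B_j)=\mu(\bigcup_{j\le J,\,w_j\in K}B_j)\le\mu(U)$. Using a finite sub-family of those $B_j$ capturing all but at most $\vp/(4M)$ of this mass, a finite subcover of the still-uncovered part of the compact $K$ by $\Delta$-cells (that part being null-equivalent to a subset of the discarded $B_j$'s, by the $K$-avoidance of $\Delta$ and $\mu(T\setminus\bigcup_j B_j)=0$), and one extra set tagged off $K$ (contributing nothing), I would assemble a finite $\Delta$-fine partition $P'$ of $T$ such that $s(P',F)$ is within $\vp/4$ of $\sum_{j\le n,\,w_j\in K}F(w_j)\mu(B_j)$ for all large $n$ and, via the common refinement $P'\wedge Q_\vp$ and the Gould estimate of the first step, within $\vp/2$ of $I_w$; the measure estimates involved are finite ones, following from outer regularity and compactness of $K$. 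Chaining these bounds yields $\limsup_n h(\sum_{j\le n}F(w_j)\mu(B_j),I_w)\le\vp$ for every $\Delta$-fine $\Pi$, that is, Mc Shane integrability of $F$ with integral $I_w$.

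The heart of the proof, and precisely the place where Theorem~\ref{wbvms-ca} had to invoke countable additivity, is this finite reduction: there one must bound tails of $\sum_n\mu(B_n)$ and sets such as $\bigcup_n(B_n\setminus E_{j(n)})$, which needs countable subadditivity, whereas here compactness of the support $K$ furnishes a finite subcover of the gauge and $\mu$ is finite on the ambient open set $U$, so every estimate is finite — which is exactly why the proof is ``much simpler''. Once this finitary skeleton is fixed, the residual work — tracking the $\mu$-null set $T\setminus\bigcup_j B_j$, matching tags across the common refinements, and adjusting the $\vp$-constants — is routine, as in the proofs of Theorems~\ref{wbvms-ca} and~\ref{ex4.1}.
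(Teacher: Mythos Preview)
Your approach diverges substantially from the paper's, and the divergence hides a gap. The paper's proof begins with a single observation you omit: restricting $\mu$ to the compact support $S$ yields a \emph{countably additive} finite measure (this is the classical Alexandroff fact that a regular, bounded, finitely additive measure on a compact space is $\sigma$-additive). Once that is noted, Theorem~\ref{ex4.1} immediately gives Gould integrability, and then the gauge is built from the finite Gould partition exactly as you do; the passage from countable to finite Mc Shane partitions is handled by Lemma~\ref{potyrala}, which applies because the ambient measure is now finite and countably additive. The paper's proof is thus a short reduction to already-proved machinery.

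Your programme instead tries to carry out both steps by hand while insisting that ``no countable additivity of $\mu$ enters''. Step~1 can indeed be made to work with only finite additivity (your compactness/finite-subcover idea is sound there, though the write-up is terse). The gap is in Step~2. After selecting finitely many $B_j$ with $w_j\in K$ and $\sum_{j>N}\mu(B_j)<\vp/(4M)$, you must control the measure of the ``still-uncovered part'' $K\setminus\bigcup_{j\le N}B_j$. You assert it is ``null-equivalent to a subset of the discarded $B_j$'s'', but to pass from that set inclusion to a \emph{measure} bound you need $\mu\bigl(\bigcup_{j>N}B_j\bigr)\le\sum_{j>N}\mu(B_j)$, i.e.\ countable subadditivity --- precisely the property you claim to avoid. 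Compactness of $K$ does not help here: the $B_j$ are merely measurable, not open, so there is no finite subcover to extract. Equivalently, you need $\mu(K\setminus\bigcup_{j\le n}B_j)\to 0$, which is continuity from above on a set of finite measure, again a form of $\sigma$-additivity.

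The gap is fillable, but the fix is exactly the paper's observation: on the compact support, regularity forces $\mu$ to be countably additive, so the tail estimate is legitimate. Once you grant that, your Step~1 becomes redundant (Theorem~\ref{ex4.1} applies directly) and your Step~2 collapses to Lemma~\ref{potyrala} plus the short finite-partition argument the paper gives. In short, the ``much simpler'' character of this proof comes not from avoiding countable additivity but from getting it for free on the compact support.
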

\begin{proof}
Denote by $S$ the compact support of $F$. Of course, it is possible to replace $T$ with $S$ in the rest of the proof. Then $\mu$ is bounded in the $\sigma$-algebra $\mathcal{A}\cap S$, and also countably-additive there, as it is well-known. Then, by Theorem
 \ref{ex4.1}, $F$ is Gould-integrable with the same integral. Let $I$ denote the integral of $F$, and fix arbitrarily $\vp>0$. Then one can find  a finite partition $P_{\vp}:=\{E_i:i=1,...,n\}$ such that 
$$h(s(F,P'),I)=h\left(\sum_{j=1}^kF(t_j)\mu(A_j)\right)\leq \vp$$
holds true, for every finite partition $P':=\{(A_j,w_j):j=1,...,k\}$ finer than $P$ and every choice of points $w_j\in A_j$, $j\in\{1,...,k\}$.
Thanks to the regularity of $\mu$, there exist open sets $G_i$, $i=1,...,n$, such that $E_i\subset G_i$ and $\mu(G_i\setminus E_i)\leq \vp (nM)^{-1}$ for all indexes $i$.  Then define a gauge $\Delta$ on $S$, by setting, for all $t\in S$:
$$\Delta(t)=\sum_iG_i\ 1_{E_i}(t).$$
Now, choose any $\Delta$-fine Mc Shane finite partition $\Pi:=\{(B_r,v_r):r=1,...,l\}$.
Without loss of generality, one can assume that $\Pi$ is u.t. (otherwise simply glue together the sets $B_r$ that are associated to the same tag), and, by nonatomicity, that $v_r\in B_r$ for all $r$. Next, for all $r$ the set $G(v_r)$ contains $E_{i(r)}$ for a suitable index $i(r)$ and $\mu(G(v_r)\setminus E_{i(r)})\leq \vp (nM)^{-1}$, hence $\mu(B_r\setminus E_{i(r)})\leq \vp (nM)^{-1}$.
Take now the partition $\Pi'$ consisting of all the sets $B_r\cap E_{i(r)}$ and all the intersections $(B_r\setminus E_{i(r)})\cap E_l$, with $l\neq i(r)$. For all sets in $\Pi'$ choose tags as follows: to the sets of the type $B_r\cap E_{i(r)}$ associate the tag  $v_r$, while, for all the other sets, the tag is any arbitrary point  of the set.
Since $\Pi'$ is finer than $P_{\vp}$, one has
$h(s(\Pi',F),I)\leq \vp;$
but also
$$h(s(\Pi,F),s(\Pi',F))\leq M\sum_{r}\mu(B_r\setminus E_{i(r)})+M\sum_{r=1}^n\sum_{l\neq i(r)}\mu((B_r\setminus E_{i(r)})\cap E_l)\leq 2\vp$$
hence 
$h(s(\Pi,F),I)\leq 3\vp.$
Since $\Pi$ is arbitrary, this concludes the proof. 
\end{proof}

As to the converse implication, namely if Mc Shane integrability implies the Birkhoff simple one, it does not hold in general, as the next example shows:
\begin{example}\label{rodriguez}\rm
In \cite[Example 2.1]{r2009b} an example found by Phillips has been recalled, in order to show a mapping $f:[0,1]\to l^{\infty}([0,1])$ which is bounded, Mc Shane integrable and not Birkhoff-integrable. Thanks to Lemma \ref{potyrala} (and subsequent sentence), $f$ is not even simple Birkhoff integrable.
Clearly, this function can be used to define a multivalued mapping $F$, simply by setting $F(t)=\{f(t)\}$ for all $t\in [0,1]$. Of course, integrability (in any sense) of $F$ is equivalent to integrability (in the corresponding sense) of $f$. Thus the mapping $F$ gives an example of a bounded multivalued function, defined in a finite countably additive non-atomic measure space, which is Mc Shane integrable but not simple Birkhoff integrable. 
\end{example}

A partial result could be the following:
\begin{remark} \rm 
Let $(T,\mathscr{T}, {\cal A}, \mu)$ be a 
$\sigma$-finite quasi Radon outer regular measure
space and let $X$ a Banach space such that $B_{X^*}$ is weak* separable. 
Then a bounded multifunction \mbox{$F: T \rightarrow cwk(X)$}  with compact support is simple Birkhoff integrable if and only if it is Mc Shane integrable and the two integrals agree.
This is an easy consequence of Proposition \ref{wbvms-ca} and of the remark of \cite[Corollary 4.2]{bs2012}.
\end{remark}

\section*{Acknowledgements}
Two of the authors have been supported by University of Perugia -- Department of Mathematics and Computer Sciences -- Grant Nr 2010.011.0403, 
Prin: Metodi logici per il trattamento dell'informazione,   Prin: Descartes and by the Grant prot. U2014/000237 of GNAMPA - INDAM (Italy).

\small

\end{document}